\numberwithin{equation}{section}
\theoremstyle{plain}
 \newtheorem{theorem}{Theorem}[section]
 \newtheorem*{theorem*}{Theorem}
 \newtheorem*{lemma*}{Lemma}
 \newtheorem{proposition}[theorem]{Proposition}
 \newtheorem*{fact*}{Fact}
 \newtheorem{lemma}[theorem]{Lemma}
 \newtheorem{corollary}[theorem]{Corollary}
\theoremstyle{remark}
 \newtheorem{remark}[theorem]{Remark}
 \newtheorem*{remark*}{Remark}
 \newtheorem*{acknowledgements}{Acknowledgements}
\numberwithin{equation}{section}
 \newtheorem{question}[theorem]{Question}
\newcommand{\C}{{\Bbb C}}
\newcommand{\R}{{\Bbb R}}
\newcommand{\Z}{{\Bbb Z}}
\renewcommand{\phi}{\varphi}
\renewcommand{\epsilon}{\varepsilon}
\newcommand{\op}{\operatorname}
\newcommand{\mc}[1]{{\mathcal #1}}
\newcommand{\argh}[1]{\op{argh}{#1}}
\newcommand{\re}[1]{\op{Re}{#1}}
\newcommand{\im}[1]{\op{Im}{#1}}
\newcommand{\inner}[2]{\left\langle{#1},{#2}\right\rangle}
\newcommand{\lnorm}[1]{N^2\!\left( #1 \right)}
\title[]{
Examples of entire zero-mean curvature graphs
of mixed-type in Lorentz-Minkowski space
via Konderak's formulas}
\author[]{Takeki Komatsu}
\address[Komatsu]{Department of Mathematical and Computing Sciences \\
  Institute of Science Tokyo \\
  2-12-1-W8-34, O-okayama Meguro-ku \\
  Tokyo 152-8552, Japan} 
\email{}
\author[]{Masaaki Umehara}
\address[Umehara]{%
Department of Mathematical and Computing Sciences,
Institute of science Tokyo,
2-12-1-W8-34, O-okayama Meguro-ku,
Tokyo 152-8552 Japan
}
\email{umehara@comp.isct.ac.jp}
\subjclass{Primary Primary 53C42; Secondary 53C50}
\keywords{ zero-mean curvature surface, 
mixed-type metric,  entire graph}
\thanks{%
The second author
  is supported by Grant-in-Aid 
for Scientific Research
(B) No.23K20794.
}%
\begin{document}
\maketitle
\begin{abstract}
Using Konderak's representation formula,
we construct
an entire zero-mean curvature 
graph of mixed-type in Lorentz-Minkowski $3$-space $\R^3_1$
over a space-like plane, 
which does not belong to the class of \lq\lq Kobayashi surfaces".
We also point out the existence of an entire
zero-mean curvature 
graph of mixed-type in Lorentz-Minkowski space
over a light-like plane.
These examples suggest that 
entire mixed-type zero-mean curvature graphs 
contain an unexpectedly large number of interesting examples.
\end{abstract}

\section{Introduction} \label{sec:I} 

We denote by $\R^3_1$ Lorentz-Minkowski 3-space
and denote by $\inner{}{}$ the canonical Lorentzian inner product
 of the signature $(-++)$.
In this paper, we consider entire graphs having zero-mean curvature
(i.e.~ZMC) surfaces in $\R^3_1$,
which are called {\it entire zero-mean curvature graphs} 
(we call {\it entire ZMC-graphs} for short).
More precisely, in $\R^3_1$,
functions generating ZMC-graphs are defined on
a space-like plane, a time-like plane or a light-like plane (cf. Magid \cite{M}).
Therefore, there are three types of entire graphs, 
which are distinguished by the terms type~S, type~T and type~L, respectively.

Calabi \cite{C} showed that 
space-like entire ZMC-graphs of type S
are space-like planes. 
Moreover, the same conclusion holds
for entire ZMC-graphs of type S
with light-like points (cf. \cite{AHUY} and \cite{AUY}).
Furthermore, as shown in Cheng-Yau \cite{CY},
proper and complete space-like ZMC-immersions into $\R^3_1$
are space-like planes. In particular, entire 
space-like ZMC-graphs of type T and L are
also space-like planes as well as the case of type S.

On the other hand,
there are many time-like entire ZMC-graphs
of types S, T and L (cf. \cite{M}).
As shown in \cite{FKKRUY},
there are mixed-type entire ZMC-graphs of type S (i.e.
containing space-like points and time-like points at the
same time), which are called {\it Kobayashi surfaces}
(the precise definition is in \cite[Definition 3.2]{FKKRUY}).
In Akamine-Umehara-Yamada \cite{AUY}, 
the three questions on embedded ZMC-surfaces are posed.
Especially,  the third problem is 
a question of whether there 
exists an entire ZMC-graph of type S, 
which is a mixed-type other than Kobayashi surfaces. 
In this paper, we give an answer to this question
by constructing an example
(the first and second problems in \cite{AUY} are still unsolved):

\begin{theorem}\label{thm:173}
There exists a mixed-type entire ZMC-graph of 
type S which is not a Kobayashi surface.
\end{theorem}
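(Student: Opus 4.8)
The plan is to produce the example explicitly through Konderak's representation formula, which parametrizes time-like minimal surfaces in $\R^3_1$ by para-holomorphic (split-complex) Weierstrass data. In null coordinates $\xi=u+v$, $\eta=u-v$ adapted to the time-like region, such data reduces to a pair of single-variable functions, one depending on $\xi$ and one on $\eta$; this reflects the factorization of para-holomorphic functions and is precisely the freedom that separates genuinely Lorentzian data from data coming from an honest holomorphic function. First I would fix a convenient such pair and compute the resulting conformal immersion $X(u,v)=(X_0,X_1,X_2)(u,v)$, arranging the data so that $X_0$ serves as the height over the space-like $(X_1,X_2)$-plane.

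The core analytic step is to show that the projection $\pi(u,v)=(X_1(u,v),X_2(u,v))$ extends to a global diffeomorphism of $\R^2$ onto $\R^2$, so that $X_0=f(X_1,X_2)$ defines a single-valued function $f$ on the whole plane. On the time-like part this would follow from the explicit Konderak data by checking that the Jacobian of $\pi$ has a fixed sign and that $\pi$ is proper. Across the light-like locus $|\nabla f|=1$ the conformal parametrization degenerates, so there I would argue directly with $f$: since the type-S ZMC equation $(1-f_y^2)f_{xx}+2f_xf_yf_{xy}+(1-f_x^2)f_{yy}=0$ is elliptic on the space-like side, $f$ is automatically real-analytic there, while on the hyperbolic (time-like) side the explicit formula gives the regularity and the matching. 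Once $f$ is known to be defined and smooth on all of $\R^2$, the ZMC equation holds on the time-like region by Konderak's theorem and extends to the space-like region by continuity. Mixed type is then confirmed by exhibiting points with $|\nabla f|<1$ and points with $|\nabla f|>1$, so that the light-like locus is a nonempty separating set.

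The final and most delicate point is to certify that $f$ lies outside the Kobayashi class. A Kobayashi surface arises from a maximal surface by real-analytic continuation of its Weierstrass data across the light-like set, and is therefore real-analytic on the entire plane. The strategy is to choose the two null-direction functions in the Konderak data to be smooth but not real-analytic, which is legitimate because para-holomorphic data need not assemble into genuine holomorphic data. Then, although the space-like part is forced to be real-analytic by ellipticity, the time-like part will fail to be real-analytic along some interior time-like curve, and hence $f$ cannot coincide with any Kobayashi surface.

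I expect the main obstacle to be reconciling these requirements simultaneously: showing that $\pi$ is a diffeomorphism onto all of $\R^2$ while the glued function $f$ has the correct regularity and graph behavior across the type-change set where the parametrization breaks down. Guaranteeing that no folding or loss of injectivity occurs at the light-like locus, that the space-like and time-like pieces match to the claimed order, and that the chosen non-analytic data still yields an entire graph, is the engineering heart of the argument and the step I would verify most carefully.
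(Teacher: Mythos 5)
Your proposal starts from the same place as the paper (Konderak's para-holomorphic Weierstrass representation applied in null coordinates), but the step on which your whole ``not a Kobayashi surface'' argument rests contains a genuine gap, and it is self-inflicted. You propose to take the two null-direction data functions smooth but not real-analytic, so that the time-like part of the graph fails to be real-analytic and hence cannot be Kobayashi. But you also assert that ``the ZMC equation holds on the time-like region by Konderak's theorem and extends to the space-like region by continuity.'' It does not: the space-like and time-like regions are disjoint open sets separated by the light-like locus, and the only mechanism by which the equation $(1-f_y^2)f_{xx}+2f_xf_yf_{xy}+(1-f_x^2)f_{yy}=0$ propagates from one side to the other is real-analytic continuation of $f$ (this is exactly the content of the paper's final Remark). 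By destroying real-analyticity you have removed the only tool available for producing the space-like part at all, and you offer no substitute for how to glue a non-analytic time-like solution to a space-like solution across the degenerate locus, nor how to guarantee the result is an entire graph. Everything you defer to ``the engineering heart'' is precisely where the construction would collapse. A secondary issue: you never exhibit concrete data, so even the existence claim is not established.

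The paper's actual proof is quite different and entirely explicit. It takes the Scherk data $g=-z$, $\omega=1/(z^4-1)$ as para-holomorphic functions, feeds them into Konderak's first formula, and recognizes the image of one connected component as lying in the real-analytic implicit surface $\sinh t=e^{y}\cos x$, i.e.\ the entire graph $t=\op{arcsinh}(e^{y}\cos x)$ over a space-like plane; real-analyticity of this closed-form expression is what extends the ZMC property to all of $\R^2$. The exclusion from the Kobayashi class is then a counting argument, not a regularity argument: the graph has no umbilics, a Kobayashi surface of order $n\ge 2$ has at least $2(n-2)$ umbilics on its space-like part, so only order $2$ is possible; but an order-$2$ Kobayashi surface has at most four time-like components, whereas this singly periodic example has infinitely many. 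If you want to salvage your approach, you should abandon the non-analytic data and instead look for a real-analytic invariant (periodicity, number of time-like components, umbilic count) that separates your example from the Kobayashi class.
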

 
To construct space-like maximal surfaces in $\R^3_1$, 
there are two Weierstrass type formulas
given by Kobayashi \cite{Kob}. 
Each of them produces two different maximal surfaces, 
taking the real part or the imaginary
part of a holomorphic map induced by 
a pair of holomorphic functions called \lq\lq Weierstrass data". 
Thus, for each Weierstrass data,
we can construct four different maximal surfaces, in general.
Since some of them may have non-trivial analytic extensions
containing time-like points,
this can be one way to prove the theorem 
(cf. Remark~\ref{rmk:space-like}). 

Similar to the space-like case, 
there are formulas (given by Konderak \cite{Kon})
producing time-like zero mean curvature surfaces,
each of which is obtained by
taking the real part or the imaginary
part of a para-holomorphic map induced by 
a pair of para-holomorphic functions.
The (second) purpose of this paper is to 
construct several examples 
of ZMC-surfaces of the Scherk type 
using Konderak's formulas and prove 
Theorem \ref{thm:173} as the consequence.
It is well-known that
poles of a meromorphic function are isolated, 
but in the para-holomorphic category,
the points corresponding to poles are
not isolated (this phenomenon is due to the existence of non-invertible
nonzero paracomplex numbers).
By this reason, time-like zero-mean curvature surfaces
produced by Konderak's formulas might not be  globally defined,
in general. 

In this paper, we will overcome this difficulty as follows:
Each surface obtained by Konderak's formulas 
consists of several connected components in general. 
Fix one of the connected components $C$ arbitrarily.
We will try to find an implicit function form
of $C$ with the expectation that 
it gives an analytic  extension of $C$ in $\R^3_1$.
Especially when the number of the connected components is more than one,
several globally defined zero mean curvature surfaces associated with the 
connected components might be obtained.

To demonstrate this, we use Weierstrass data corresponding to the
Scherk surface which are given as pairs of para-holomorphic functions.
we apply them to Konderak's formulas. Then we
obtain eight ZMC-surfaces with four different congruent classes
(cf. Remark \ref{rmk;six}).
One of them denoted by $\mc S'_1$
coincides with the Scherk-type surface $\mc K_4$ obtained 
by Kobayashi's formula and 
used in the proof of Theorem \ref{thm:173}.
In connection with the above theorem, we also show 
the following:

\begin{proposition}\label{thm:183}
There exists a mixed-type entire ZMC-graph of type L,
which is a Kobayashi surface.
\end{proposition}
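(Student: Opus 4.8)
The plan is to exhibit a single surface carrying two graph representations simultaneously: as a mixed-type entire ZMC-graph of type~S (which is what makes it a Kobayashi surface) and as an entire ZMC-graph of type~L. Since being of mixed type is a property of the surface itself and not of the direction along which it is projected, it suffices to start from a Kobayashi surface and upgrade its type~S description to a type~L one. Concretely, I would take a Scherk-type maximal surface produced by Kobayashi's formula from suitable Weierstrass data, take the real (or imaginary) part to obtain an explicit maximal surface, and analytically extend it across its light-like points; by construction this is a Kobayashi surface, hence a mixed-type entire ZMC-graph of type~S, and it comes with an explicit implicit representation $\Phi(t,x,y)=0$ in $\R^3_1$ (for instance the Scherk-type surface $\mc K_4$).

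The next step is to rewrite this implicit equation in coordinates adapted to a light-like plane. Fixing a null direction, I would introduce null coordinates $u=t+x$, $w=t-x$, so that the Lorentzian inner product is expressed as $-du\,dw+dy^{2}$ and the plane $\{w=0\}$ is light-like, parametrized by $(u,y)$, and then solve $\Phi=0$ for the transverse null coordinate, obtaining $w=g(u,y)$. In these coordinates the induced first fundamental form has determinant $-(g_u+\tfrac14 g_y^{\,2})$, so the surface is space-like, light-like or time-like according to the sign of $g_u+\tfrac14 g_y^{\,2}$; consequently the presence of a non-empty light-like locus, and hence the mixed-type property, can be read off directly from $g$, and is in any case inherited from the type~S description. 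Thus, once $g$ is shown to be a globally defined function on the whole light-like plane, the surface is an entire ZMC-graph of type~L and is the desired example.

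The main obstacle is precisely this last point: showing that $g$ is real-analytic and single-valued on all of $\R^2$, i.e.\ that the null projection $(t,x,y)\mapsto(u,y)$ restricts to a diffeomorphism from the surface onto the entire light-like plane. Two things must be verified. First, transversality: the null ruling $\partial_w$ must nowhere be tangent to the surface, so that no degeneracy of the kind produced by non-invertible nonzero paracomplex numbers (the ``pole'' phenomenon discussed in the Introduction) obstructs solving for $w$. Second, and more delicately, surjectivity: every null line $\{u=\text{const},\ y=\text{const}\}$ must meet the surface exactly once. This is where the choice of light-like plane is decisive, because a Scherk-type surface is asymptotic to several null half-planes, and if one of these asymptotic null directions were parallel to the ruling $\partial_w$, the projection would fail to be onto and $g$ would be defined only over a proper subdomain rather than over all of $\R^2$. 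The heart of the argument is therefore to select the null direction so that it avoids all of the surface's asymptotic null directions, and then to control the behaviour of $\Phi=0$ at infinity carefully enough to conclude that the null projection is a global diffeomorphism; the mixed-type conclusion and the identification with a Kobayashi surface then follow at once.
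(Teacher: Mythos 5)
Your overall strategy---produce one surface that is simultaneously a Kobayashi surface and an entire graph over a light-like plane---is the same as the paper's, but there are two genuine gaps. First, your proposed witness is wrong: a surface obtained from Kobayashi's representation formula is not thereby a ``Kobayashi surface'' in the sense of \cite[Definition 3.2]{FKKRUY}, and the Scherk-type surface $\mc K_4$ (congruent to $\mc S'_1$) that you name is precisely the surface the paper proves is \emph{not} a Kobayashi surface --- that is the content of Theorem~\ref{thm:173}, via the count of umbilics and of time-like components. Your parenthetical claim that being a mixed-type entire ZMC-graph of type~S ``is what makes it a Kobayashi surface'' is exactly the implication this paper refutes.

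Second, the step you yourself identify as ``the heart of the argument''---that the null projection restricts to a global diffeomorphism onto the light-like plane---is only described, not proved, and for the Scherk-type candidates it actually fails. For instance, for $\mc K_1=\{e^t\cosh x=\cosh y\}$ (a genuine Kobayashi surface) the null line $t=s$, $y=c+s$, $x=x_0$ meets the surface where $e^s\cosh x_0=\cosh(c+s)$; since both sides grow like $e^{s}$ as $s\to+\infty$, the difference tends to $-\infty$ at both ends whenever $\cosh x_0<e^{c}/2$, and such lines can miss the surface entirely. A similar competition between $\sinh t$ and $e^{\pm y}\cos x$ defeats the projection for $\mc S'_1$. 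The paper avoids all of this by choosing the Enneper-type surface instead: substituting $g=z$, $\omega=1$ into Konderak's fourth formula and extending the resulting ruled surface gives
$\mc E_4=\{(t,x,y)\,;\, t-y=-(t+y)^3/6+x(t+y)\}$,
whose defining equation is \emph{linear} in the transverse null coordinate $\zeta:=t-y$, so it is manifestly the entire graph $\zeta=x\eta-\eta^3/6$ over the light-like $x\eta$-plane ($\eta:=t+y$); the Kobayashi-surface and mixed-type properties are then quoted from \cite[Example 3.8]{FKKRUY}. Without replacing your candidate by such a surface and actually establishing the global graph property, the proof does not go through.
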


Based on Theorem \ref{thm:173} and Proposition \ref{thm:183} in this paper 
(in addition to the two remaining problems in \cite{AUY}),
 we would like to pose the following question:

\begin{question}
{\it Regarding the example $($cf. \eqref{eq:S4}$)$ 
in this paper, is there a systematic way to
construct mixed-type entire ZMC-graphs of type S other than Kobayashi surfaces?}
\end{question}

To the best of the authors' knowledge, all examples 
of entire ZMC-graphs induced by Kobayashi surfaces 
have connected space-like parts with finite total curvature,
and so, they are not periodic.
So, this question is closely related to following:

\begin{question}
{\it Is there a systematic way to 
construct periodic mixed-type entire ZMC-graphs of type S? }
\end{question}

Partial answers to this question  were recently found by 
Kim and Ogata \cite[Figure 2, Eq (3.14) and Eq (3.17)]{O}.
More precisely, they construct two families of singly periodic 
mixed-type entire ZMC-graphs of type S other than the example 
in this paper. On the other hand, relating to
Proposition~\ref{thm:183},
the following two questions are raised:

\begin{question}
  {\it Except for the example given in this paper, 
  are there any other mixed-type entire ZMC-graphs of type L?}
\end{question}

\begin{question}
  {\it Is there a mixed-type entire ZMC-graphs of type~T?}
\end{question}

The authors are unaware of any such examples of type~T.

\section{Konderak's representation formulas}

\subsection*{Properties of the logarithmic function on $\check \C$}
A para-complex number 
(which is also known as a split-complex number or hyperbolic number)
 is written as $z=u+jv$
($u,v\in \R$), where $j$ is called the {\it hyperbolic unit}
satisfying $j^2=1$.
The conjugate of $z$ is $\bar{z}:=u-jv$ 
and $\lnorm{z}:=z\bar z(=u^2-v^2)$.
We set
$$
\re(z):=\frac{z+\bar z}2(=u),\qquad
\im(z):=\frac{z-\bar z}{2j}(=v).
$$
We denote by $\check{\C}$ the set of para-complex numbers, which is
identified with $\R^2$.
If $z\in \check{\C}$ satisfies 
$\lnorm{z}>0$ (resp. $\lnorm{z}<0$),
it can be uniquely written as
\begin{align}\label{eq:245}
  z=\pm e^s(\cosh t+j\sinh t) \quad
\text{(resp. $z=\pm e^s(\sinh t+j\cosh t)$)}
\quad (s,t\in \R).
\end{align}
In each of these two cases, we denote $\argh z:=t$, which is
called the {\it hyperbolic argument} of $z$.
Then it holds that (cf. \eqref{eq:Log0} in the appendix)
\begin{equation}\label{eq:302}
\log z=s+jt=\log |z|+j \argh{z} \qquad \left(|z|
:=\sqrt{|\lnorm{z}|}\right).
\end{equation}
To avoid confusion with the usual absolute value symbol, 
the notation $\log |z|$ will not be used in this paper.
If $\alpha,\beta\in \check \C$ satisfies 
$N^2(\alpha)\ne 0$ and
$N^2(\beta)\ne 0$, 
we have
\begin{align}\label{eq-1}
& \argh{\left(\frac{\alpha}{\beta}\right)}
=\argh{\left (\alpha\bar{\beta} \right)}.
\end{align}

\subsection*{The four Weierstrass-type representation formulas}

For smooth real-valued functions 
 $X(u,v)$ and $Y(u,v)$ 
on a domain $U$ of $\check{\C}$,
a $\check \C$-valued function 
$f(u,v)=X(u,v)+jY(u,v)$
 is defined.
If it satisfies the para-Cauchy-Riemann equations
\begin{equation}\label{eq:PCR}
X_u=Y_v,\qquad Y_u=X_v 
\end{equation}
on $U$,
we call $f$ a {\it para-holomorphic function} on $U$.
Basic properties of para-holomorphic functions are given in the appendix.
Let $g$ and $\omega$ be two para-holomorphic functions on $U$ such that
  \begin{align}\label{eq:ds2}
    ds^2=-(1-\lnorm{g})^2\lnorm{\omega} (du^2-dv^2)
  \end{align}
gives a Lorentzian metric on $U$. 
We 
assume that $U$ is simply connected  and
fix a point $z_0\in U$.
Then (see Proposition \ref{eq:909} in the appendix 
for the definition of integrals)
   \begin{align}\label{eq:W1}
    F_1(z)&:=\re\int_{\!\!z_0}^{z}(-1-g^2,j(1-g^2),2g)\omega dz, \\
\label{eq:W2}
    F_2(z)&:=\im\int_{\!\!z_0}^{z}(-1-g^2,j(1-g^2),2g)\omega dz
  \end{align}
are time-like ZMC surfaces defined on $U$
whose first fundamental forms are $ds^2$ and $-ds^2$, respectively.
The second surface $F_2$ is called the {\it conjugate} surface of $F_1$.
We call the formula \eqref{eq:W1} (resp. \eqref{eq:W2}) 
the {\it first $($resp. second$)$
representation formula}.
The first formula 
\eqref{eq:W1} 
is essentially the same as Konderak's.
In fact, after changing the coordinates $(t,x,y)$ to $(y,x,-t)/2$
and replacing $(g,\omega)$ by $(jg,j\omega)$,
the first formula coincides with \cite[(3.14)]{Kon}.
On the other hand, if we replace $(g,\omega)$ in \eqref{eq:W1}-\eqref{eq:W2} by
$\left((g-1)/(g+1),(1+g)^2\omega/2\right)$,
then we obtain 
  \begin{align}\label{eq:W3}
    F_3(z)&:=\re\int_{\!\!z_0}^{z}(-1-g^2,2jg,-1+g^2)\omega dz,\\
\label{eq:W4} 
    F_4(z)&:=\im\int_{\!\!z_0}^{z}(-1-g^2,2jg,-1+g^2)\omega dz,
  \end{align}
where  
the first fundamental form of $F_3$ is given by
  \begin{align*}
    ds^2=-(g+\bar{g})^2\lnorm{\omega}(du^2-dv^2),
  \end{align*}
and the first fundamental form of $F_4$ is $-ds^2$.
The surface $F_3$ (resp.~$F_4$)
is called the {\it dual} (resp.~{\it conjugate dual}) surface 
of $F_1$. 
Moreover, we call the formula producing $F_3$ (resp.~$F_4$) 
the {\it third representation formula} 
(resp.~the {\it fourth representation formula}).
The third formula \eqref{eq:W3}
is essentially the same as Konderak's.
In fact, after changing the coordinates $(t,x,y)$ to $(x,-y,-t)/2$
and replacing $(g,\omega)$ by $(jg,\omega)$,
the third formula coincides with \cite[(3.11)]{Kon}.

\begin{figure}[h!]
  \begin{center}
    \includegraphics[height=36mm]{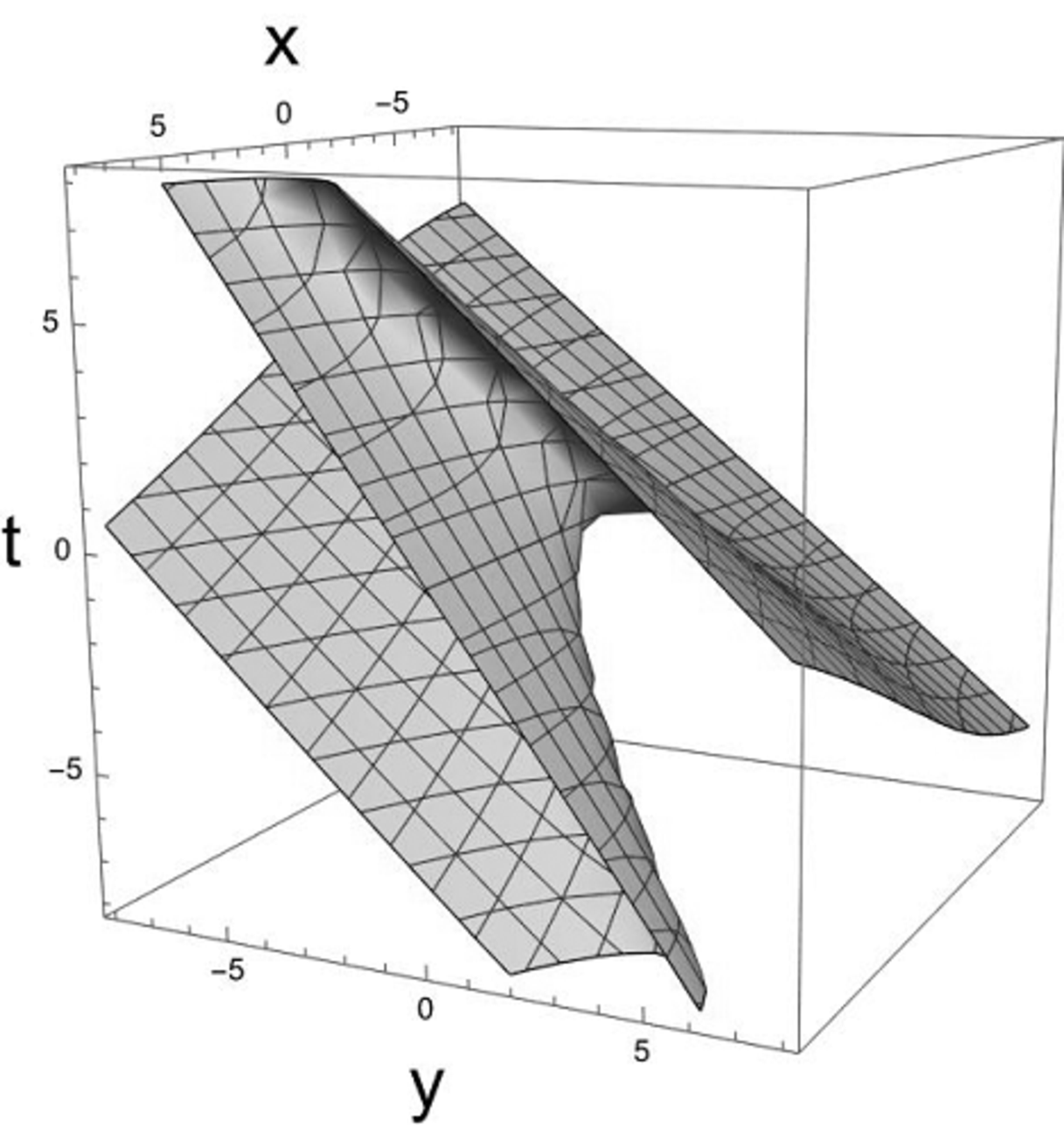}\quad
    \includegraphics[height=36mm]{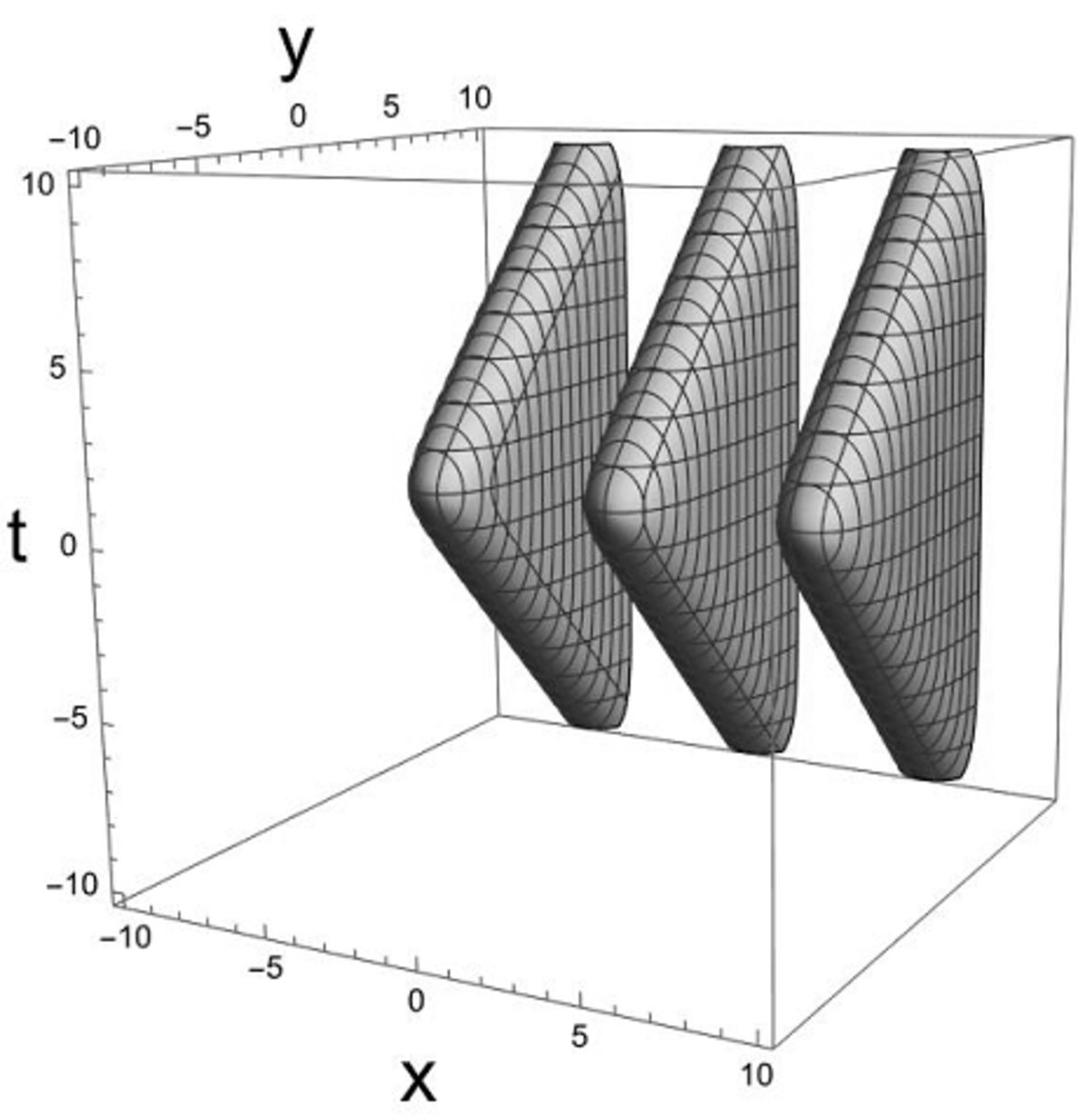}\quad
    \includegraphics[height=36mm]{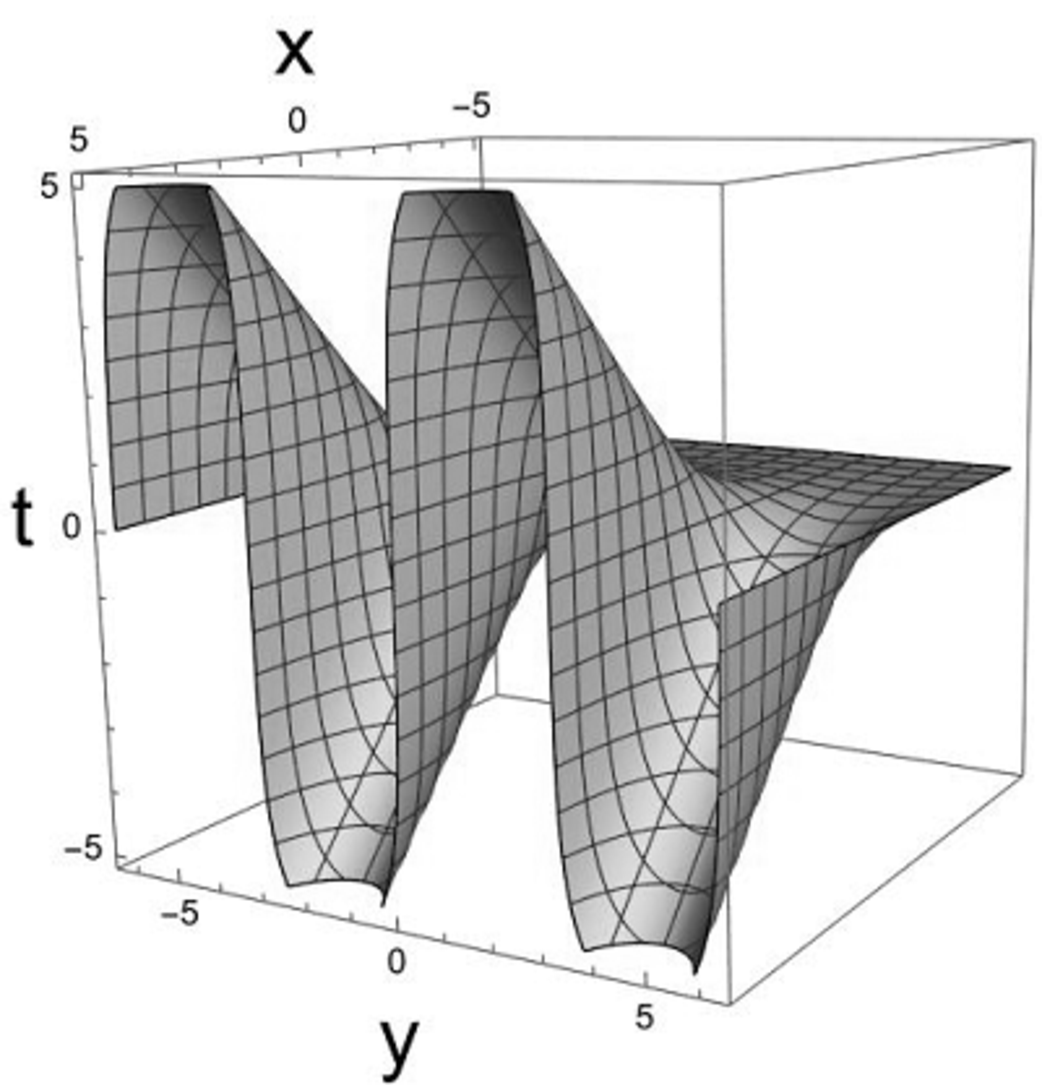}
    \caption{The Enneper-type surface $\mc E_4$ (left)
and the Scherk-type surfaces $\mc S_1$ (center) and $\mc S'_1$ (right)
}
    \label{fig1}
  \end{center}
\end{figure}

\subsection*{An interpretation of the four formulas}

We can set (cf. Proposition \ref{label:prop1092})
$$
g(z)=
\frac{g_1(u+v)+g_2(u-v)}2+j\frac{g_1(u+v)-g_2(u-v)}2
$$
and
$$
\omega(z)=
\frac{w_1(u+v)+w_2(u-v)}2+j\frac{w_1(u+v)-w_2(u-v)}2.
$$
By setting $x=(u+v)/2,\,\, y=(u-v)/2$,
we can write (cf. \eqref{eq:1110} in the appendix)
$$
g(z)=\epsilon_1 \hat g_1(x)+\epsilon_2 \hat g_2(y),\qquad
\omega(z)=\epsilon_1 \hat \omega_1(x)+\epsilon_2 \hat\omega_2(y),
$$
where $\epsilon_i$ ($i=1,2$) are para-complex numbers defined in \eqref{eq:1080}
and
\begin{align*}
&\hat g_1(x)=g_1(2x)=g_1(u+v),\qquad \hat g_2(y)=g_1(2y)=g_1(u-v), \\
&\hat w_1(x)=w_1(2x)=w_1(u+v),\qquad \hat w_2(y)=w_1(2y)=w_1(u-v).
\end{align*}
We assume that $g_1,g_2,\omega_1,\omega_2$ 
are defined on an interval of $\R$ containing the origin.
We can write $dz=2(\epsilon_1 dx+\epsilon_2 dy)$.
Using \eqref{eq1E} and the relations
$$
\op{Re}(\epsilon_1)=\op{Re}(\epsilon_2)=\frac12,\qquad
\op{Im}(\epsilon_1)=-\op{Im}(\epsilon_2)=\frac12,
$$
we can rewrite the formulas of $F_i$ ($i=1,2,3,4$).
For example, we have
\begin{align}\label{eq:413}
F_1&=\sum_{i=1}^2 \int_{0}^{x_i} \Big( 
-1-g_i(x_i)^2,
(-1)^{1+i}(1-g_i(x_i)^2),
2g_i(x_i) \Big) \omega_i(x_i)dx_i, \\
F_3&=\sum_{i=1}^2 \int_{0}^{x_i} \Big( 
-1-g_i(x_i)^2,
(-1)^{1+i}2g_i(x_i),-1+g_i(x_i)^2
\Big) \omega_i(x_i)dx_i,
\end{align}
where $dx_1:=dx$ and $dx_2:=dy$. 
The first formula
\eqref{eq:413} is essentially the same as \cite[Fact~A.7]{A}.
If we substitute $-\omega_2$ to $\omega_2$
in the above two formulas, we obtain $F_2$ and $F_4$, respectively.
In spite of these new expressions of $F_i$ ($i=1,2,3,4$),
the original the four formulas 
\eqref{eq:W1}, \eqref{eq:W2}, \eqref{eq:W3}, \eqref{eq:W4} 
are crucial in this paper,
since we need to apply
para-holomorphic calculus as in the appendix, individually.

\subsection*{An entire ZMC-graph of type L}
We first prove Proposition~\ref{thm:183}:

\medskip
\noindent
{\it Proof of Proposition~\ref{thm:183}.}
We set
\begin{equation}\label{eq:370}
  g:=z,\quad \omega:=1,
\end{equation}
which are the Weierstrass data of the Enneper surface in Euclidean 3-space.
Here, we think of \eqref{eq:370}
 as a pair of para-holomorphic functions, and
substitute it into the formulas \eqref{eq:W3} and \eqref{eq:W4},
we obtain
\begin{align*}
  F_3+jF_4=\left(-z-\frac{z^3}{3},jz^2,-z+\frac{z^3}{3}\right).
\end{align*}
Letting $z=u+jv$ and taking the imaginary part, 
we have
\begin{align}
F_3(u,v)&=\left(-\frac{u^3}{3}-u v^2-u,2 u v,\frac{u^3}{3}+u v^2-u\right),\\
F_4(u,v)&=\left(-u^2 v-\frac{v^3}{3}-v,u^2+v^2,u^2 v+\frac{v^3}{3}-v\right),
\end{align}
whose singular sets are $u^2-v^2=1$.
Moreover, we can write
\begin{equation*}
  \begin{aligned}
    F_4
           &=\left(-v-\frac{v^3}{3},v^2,-v+\frac{v^3}{3}\right)+s\left(-v,1,v\right),
  \end{aligned}
\end{equation*}
where $s:=u^2$.
This expression implies 
that the surface can be real analytically extended as  a ruled surface
(cf. \cite{Kob}). 
The whole image of this 
analytic extension can be expressed as 
(see Figure \ref{fig1}, left)
\begin{align}\label{eq;E4}
  \mc E_4:=
\left\{(t,x,y)\in\mathbb{R}^3_1\,\,;\,\,t-y=-\frac{(t+y)^3}{6}+x(t+y)\right\},
\end{align}
which is given in Kim, Lee and Yang \cite{KLY}.  
Moreover,
it has been shown in \cite[Example 3.8]{FKKRUY} 
that it is an example of a Kobayashi surface.
If we set
  $
  \eta:=t+y,\,\, \zeta:=t-y,
  $
then $(x,\eta,\zeta)$ gives a new coordinate system of $\R^3_1$ 
  such that the $x\eta$-plane is light-like.
  Moreover, $\mc E_4$ coincides with the graph of the function
  $\zeta:=x\eta-\eta^3/6$.
\qed

\begin{remark}
The image of $F_3$ is
a subset of
$$
\mc E_3:=\left\{(t,x,y)\in\mathbb{R}^3_1\,\,;\,\,t^2-y^2=
\frac{(t+y)^4}{12}+x^2\right\}.
$$
This expression is given in \cite{FKKRSUYY}.
The subset $\mc E_3$ contains 
a light-like line $L:=\{(t,0,-t)\,;\, t\in \R\}$ but not contains any
space-like points.
Similarly, if we substitute \eqref{eq:370} into the first and second formulas,
we obtain
\begin{align*}
F_1(u,v)&=\left(-u-\frac{u^3}{3}-uv^2,v-\frac{v^3}3-u^2v,u^2+v^2\right), \\
F_2(u,v)&=\left(-v-\frac{v^3}{3}-u^2v,u-\frac{u^3}{3}-uv^2,2uv\right),
\end{align*}
which are called the {\it time-like Enneper surface} and its dual.
These two surfaces have singular points along the set $u^2-v^2=1$.
From these examples, we can see that
the four representation formulas induce four different surfaces, in general.
\end{remark}

\section{ZMC-surfaces associated with the Scherk surface}

To obtain implicit function forms of Scherk-type surfaces
using Konderak's representation formulas, 
the following identities are useful:
\allowdisplaybreaks{
\begin{align}\label{Ar1}
&\cosh \log\left(\frac{a}b\right)=\frac{a^2+b^2}{2ab},\quad
\sinh \log\left(\frac{a}b\right)=\frac{a^2-b^2}{2ab} 
\qquad (a,b>0), \\
& \cos(\arctan a)
=\frac{1}{\sqrt{1+a^2}},\quad \sin(\arctan a)=\frac{a}{\sqrt{1+a^2}}
\qquad (a\in \R), \\ \label{eq:ATab}
& \cos(\arctan a\pm \arctan b)=\frac{1\mp ab}{\sqrt{1+a^2}\sqrt{1+b^2}} \qquad (a,b\in \R), \\
& \sin(\arctan a\pm \arctan b)=\frac{a\pm b}{\sqrt{1+a^2}\sqrt{1+b^2}} \qquad (a,b\in \R), \\
&\cosh (\op{argh}z)=\frac{|\re(z)|}{\sqrt{|\lnorm{z}|}},\quad
|\sinh (\op{argh} z)|=\frac{|\im(z)|}{\sqrt{|\lnorm{z}|}}\qquad
(\text{if $\lnorm{z}>0$}), \\
\label{Ar2}
&\cosh (\op{argh}z)=\frac{|\im(z)|}{\sqrt{|\lnorm{z}|}},\quad
|\sinh (\op{argh} z)|=\frac{|\re(z)|}{\sqrt{|\lnorm{z}|}}\qquad
(\text{if $\lnorm{z}<0$}).
\end{align}}

We set
\begin{align}\label{eq:303}
  g:=-z,\qquad \omega:=\frac{1}{z^4-1},
\end{align}
which are well-known as the Weierstrass data of Scherk's surface 
in Euclidean 3-space. 
We think \eqref{eq:303}
 is a pair of para-holomorphic functions.
Substituting \eqref{eq:303} to the first and second representation formulas
(cf. \eqref{eq:W1} and \eqref{eq:W2}).
Then
\begin{align*}
  \Big(-1-g^2,j(1-g^2),2g\Big)\omega
=\frac{1}{2}
\left(\frac{1}{z+1}-\frac{1}{z-1},\frac{\-2j}{z^2+1},
\frac{2z}{z^2+1}-\frac{2z}{z^2-1}\right).
\end{align*}
By 
\eqref{eq:W1},
\eqref{eq:W2}, \eqref{eq:Log0},
\eqref{eq:log}  and \eqref{eq:TanI},
we have
\begin{equation}\label{eq:F1F2}
F_1+jF_2=
\frac{1}{2}\Big(\log \! A(z),
-2j \arctan z,\log \! A(z^2)\Big)
\quad
\left(A(z):=\frac{z+1}{z-1}\right).
\end{equation}
By setting $z=u+jv$ ($u,v\in \R$),
the following two identities are obtained
\begin{align}
\label{N2}
&\lnorm{z+1}+\lnorm{z-1}=2(1-u^2-v^2)=2(1+\lnorm{z}).
\end{align}
According to 
\eqref{eq;447}
in the appendix,
we define
\begin{align}\nonumber
  &\arctan(u+jv)
 :=\frac{1}{2}\Big(
\arctan(u+v)+\arctan(u-v) \\ \label{eq:AT}
&\phantom{aaaaaaaaaaaaaaaaaaaa}+j\left(\arctan(u+v)-\arctan(u-v)\right)\Big).
\end{align}
Using this, we compute the real part of \eqref{eq:F1F2}:
If we write $2F_1=(T_1,X_1,Y_1)$,
then, by  \eqref{eq:302} and \eqref{eq:AT}, we have
\begin{align*}
&T_1=\log\sqrt{\left|\lnorm{A(z)}\right|},\quad
X_1=\arctan(u-v)-\arctan(u+v), \\
&Y_1=\log\sqrt{\left|\lnorm{A(z^2)}\right|}.
\end{align*}
By \eqref{eq:ATab} and
 \eqref{eq:N3}, we have 
\begin{equation}\label{eq:551}
\cos X_1
=\frac{1+\lnorm{z}}
{\sqrt{(1+(u-v)^2)(1+(u+v)^2)}} =\frac{1+\lnorm{z}}{\sqrt{\lnorm{z^2+1}}}.
\end{equation}
We set 
\begin{align}\label{Uepsilon0}
  D_+:=\{z\in \check \C\,;\, \lnorm{A(z)}>0\},\quad
  D_-:=\{z\in \check \C\,;\, \lnorm{A(z)}<0\},
\end{align}
which are non-empty subsets of $\check \C$.
We consider the case
$z\in D_+$:
Then the signs of $\lnorm{z+1}$ and $\lnorm{z-1}$ are the same.
So, we have 
\begin{align}\nonumber
\cosh T_1&=
\cosh \log \left(\sqrt{\frac{|\lnorm{z+1}|}{|\lnorm{z-1}|}}\right) 
=\frac{|\lnorm{z+1}|+|\lnorm{z-1}|}{2\sqrt{|\lnorm{z+1}\lnorm{z-1}|}} \\ \nonumber
&=\frac{|\lnorm{z+1}+\lnorm{z-1}|}{2\sqrt{|\lnorm{z+1}\lnorm{z-1}|}}=
\frac{|1+\lnorm{z}|}{\sqrt{|\lnorm{z+1}\lnorm{z-1}|}} \\
&=\epsilon \cos X_1 e^{Y_1}, \label{eq:565}
\end{align}
where $\epsilon\in \{+,-\}$ is the sign of $1+N^2(z)$.
Thus, the set $2F_1(D_+)$ is contained in
$$
\mathbb S_1:=\{(t,x,y)\in \R^3_1\,;\, \cosh^2 t=e^{2y}\cos^2 x \},
$$
which has no singular points and consists only of time-like points.
The set $\mathbb S_1$ consists of
the two subsets
\begin{align*}
&\mc S_1:=\{(t,x,y)\in \R^3_1\,;\, \cosh t=e^{y}\cos x \}, \\
&\tilde {\mc S}_1:=\{(t,x,y)\in \R^3_1\,;\, \cosh t=-e^{y}\cos x \},
\end{align*}
that are congruent. Every connected component of $\mathbb S_1$ 
is congruent to the set
$$
\{(t,x,y)\in \R^3_1\,;\, \cosh t=e^{y}\cos x,\,\, |x|<\pi \} (\subset \mc S_1). 
$$
The figure of $\mc S_1$  is shown in the middle image of Figure~\ref{fig1}.

We next consider the case
$z\in D_-$:
Then the signs of $\lnorm{z+1}$ and $\lnorm{z-1}$ are different.
By setting $t_1:=T_1$, $y:=Y_1$ and $x:=X_1$ 
(resp. $x:=X_1+\pi$)
if $T_1>0$ (resp. $T_1<0$),
we have
\begin{align} \nonumber
|\sinh T_1|&=
\sinh \log \left(\sqrt{\frac{|\lnorm{z+1}|}{|\lnorm{z-1}|}}\right) 
=\left|\frac{|\lnorm{z+1}|-|\lnorm{z-1}|}{2\sqrt{|\lnorm{z+1}\lnorm{z-1}|}}\right| \\
\nonumber
&=\frac{|\lnorm{z+1}+\lnorm{z-1}|}{2\sqrt{|\lnorm{z+1}\lnorm{z-1}|}}=
=\frac{\pm\Big|1+\lnorm{z}\Big|}{\sqrt{|\lnorm{z+1}\lnorm{z-1}|}}\\
&=
\epsilon \cos X_1 e^{Y_1},\label{eq:586}
\end{align}
where $\epsilon\in \{+,-\}$ is the sign of $1+N^2(z)$.
Thus, the set $2F_1(D_-)$ is a subset of
\begin{equation}\label{eq:S4}
\mathbb S'_1:=\{(t,x,y)\in \R^3_1\,;\, \sinh^2 t=e^{2y}\cos^2 x \}.
\end{equation}
The set $\mathbb S'_1$ has two connected components
\begin{align*}
&\mc S'_1:=\{(t,x,y)\in \R^3_1\,;\, \sinh t=e^{y}\cos x \}, \\
&\tilde{\mc S}'_1:=\{(t,x,y)\in \R^3_1\,;\, \sinh t=-e^{y}\cos x \},
\end{align*}
which are congruent. The first set $\mc S'_1$ gives the
entire graph
\begin{equation}\label{eq:692}
t=\op{arcsinh}(e^{y}\cos x), 
\end{equation}
which is indicated in Figure~\ref{fig1}, right.
This surface is a singly periodic entire graph of mixed-type, which will be applied
to prove  \ref{thm:173}, later.
We next compute $F_2$.
If we write $2F_2=(T_2,X_2,Y_2)$,
then 
$$
T_2=\argh A(z),\quad
X_2=-\arctan(u+v)-\arctan(u-v),\quad
Y_2=\argh A(z^2).
$$
By 
\eqref{eq:N3},
we have 
\begin{equation}\label{eq:652}
\sin X_2=\frac{-2u}{\sqrt{1+(u+v)^2}\sqrt{1+(u-v)^2}}=\frac{-(z+\bar z)}
{\sqrt{\lnorm{z^2+1}}}.
\end{equation}
Since (cf. \eqref{eq:N3})
$$
\lnorm{A(z)}\,\lnorm{A(z^2)}
=
\frac{\lnorm{z^2+1}}
{\left(\lnorm{z-1}\right)^2}>0,
$$
$\lnorm{A(z^2)}$ is positive (resp. negative)
if $z\in D_+$ (resp. $z\in D_-$).
In particular, if $z\in D_+$, we have
(cf. \eqref{eq-1} and \eqref{Ar1})
\begin{align}\label{eq:624a}
|\sinh T_2|&
=
|\sinh \argh A(z)|
=\left|
\frac{\im((z+1)(\bar z-1))}{\sqrt{|\lnorm{z^2-1}|}}
\right|
=\frac{|j(\bar z-z)|}{\sqrt{|\lnorm{z^2-1}|}},
 \\
|\sinh Y_2|&= \label{eq:624b}
|\sinh \argh A(z^2)|
=\frac{|j(\bar z^2-z^2)|}{\sqrt{|\lnorm{z^4-1}|}} 
=\frac{|j(\bar z-z)(\bar z+z)|}{\sqrt{|\lnorm{z^4-1}|}}.
\end{align}
So if we set
$$
\mathbb S_2:=\{(t,x,y)\in \R^3_1\,;\, \sinh^2 y=\sinh^2 t \sin^2 x\},
$$
then $2F_2(D_+)$ is a subset of $\mathbb S_2$.
By definition, $\mathbb S_2$ consists of the two subsets
\begin{align*}
&\mc S_2:=\{(t,x,y)\in \R^3_1\,;\, \sinh t=e^{y}\cos x \}, \\
&\tilde{\mc S}_2:=\{(t,x,y)\in \R^3_1\,;\, \sinh t=-e^{y}\cos x \},
\end{align*}
which are congruent, The first subset $\mc S_2$ 
contains the light-like lines 
$\{(t,n \pi,\pm t)\,;\, t\in \R,\,\, n\in \Z\}$,
but does not contain any space-like points 
(see Figure~\ref{fig2}, left).

On the other hand, when $z\in D_-$, we have
(cf. \eqref{Ar2})
\begin{align}\label{eq:647a}
\cosh T_2&=\cosh \argh A(z)
=\frac{|\im((z+1)(\bar z-1))|}{\sqrt{|\lnorm{z^2-1}|}}
=\frac{|j(\bar z-z)|}{\sqrt{|\lnorm{z^2-1}|}}, \\
\label{eq:647b}
\cosh Y_2&=
\cosh \argh A(z^2)
=\frac{|j(\bar z^2-z^2)|}{\sqrt{|\lnorm{z^4-1}|}} 
=\frac{|j(\bar z-z)(\bar z+z)|}{\sqrt{|\lnorm{z^4-1}|}}. 
\end{align}
Thus, $2F_2(D_-)$ is a subset of
$$
\mathbb S'_2:=\{(t,x,y)\in \R^3_1\,;\, \cosh^2 y=\cosh^2 t \sin^2 x\}.
$$
By definition, $\mc S'_2$ consists of the two subsets
\begin{align*}
&\mc S'_2:=\{(t,x,y)\in \R^3_1\,;\, \cosh y=\cosh t \sin x \}, \\
&\tilde{\mc S}'_2:=\{(t,x,y)\in \R^3_1\,;\, \cosh y=-\cosh t \sin x \},
\end{align*}
which are congruent.
The figure of $\mc S'_2$ is  given in Figure~\ref{fig2}, right.
Furthermore, every connected component of $S'_2$ is congruent to
$$
\{(t,x,y)\in \R^3_1\,;\, \cosh y=\cosh t \sin x,\,\, x\in (0,\pi) \},
$$
which does not contain any space-like points, but does
contain light-like lines
$$
\{(t,\frac{\pi}2,\pm t)\,;\, t\in \R,\,\, n\in \Z\}.
$$

\begin{remark}
We set $g:=z$ and $\omega:=-1/z^2$,
which are the Weierstrass data of the catenoid in Euclidean 3-space.
We think they are para-holomorphic functions.
Since the singular set of 
$\omega(=-1/{z^2})$ is just the point $z(=u+jv)$ satisfying
$u^2-v^2=0$ and thus the non-singular part of $\omega$
has four connected components.
If $\lnorm{z}>0$ $($resp. $\lnorm{z}<0)$, then we can write
$$
z=\pm e^s(\cosh t+j \sinh t)\qquad
\Big(\text{\rm resp. } z=\pm e^s (\sinh t+j \cosh t)\Big),
$$
where $s,t\in \R$.
Using these expressions, 
the first and second formulas for $\lnorm{z}>0$
induce
\begin{align}
\mc C_1=\{(t,x,y)\,;\, t^2-x^2=\sinh^2 y\},\quad
\mc C_2=\{(t,x,y)\,;\, t=x\tanh y\}.
\end{align}
On the other hand,
the first and second formulas for $\lnorm{z}<0$
induce
\begin{align}
\mc C'_1=\{(t,x,y)\,;\, x^2-t^2=\cosh^2 y\},\quad
\mc C'_2=\{(t,x,y)\,;\, t\tanh y=x\}.
\end{align}
Similarly, we can compute
the third and fourth formulas for $\lnorm{z}>0$
and $\lnorm{z}<0$, respectively. Then the four surfaces
$\mc C_3,\,\mc C'_3,\mc C_4,\,\mc C'_4$ are obtained, but
\begin{itemize}
\item $\mc C_3$ is congruent to $\mc C'_1$,
\item $\mc C_4$ is congruent to $\mc C'_2$,
\item $C'_3$ is congruent to $\mc C_1$, and 
\item $C'_4$ is congruent to $\mc C_2$.
\end{itemize}
As a consequence,
the four Konderak's formulas induce the 
four non-congruent ZMC-surfaces.
$\mc C_1,\,\mc C'_1$ and $\mc C'_2$ have no space-like points,
and $\mc C_2$ is a mixed-type entire ZMC-graph over a space-like plane.
These surfaces are well-known:
$\mc C_1$ is given in \cite[Figure~1(b)]{FKKRSUYY},
$\mc C_2$ is given in Kobayashi \cite[Example 2.8]{Kob},
$\mc C'_1$ is given in
Akamine-Lopez \cite{AL}, and 
$\mc C'_2$ is given in Kim-Koh-Shin-Yang \cite[Example 2.9]{KKSY}.
\end{remark}

\begin{figure}
  \begin{center}
    \includegraphics[height=38mm]{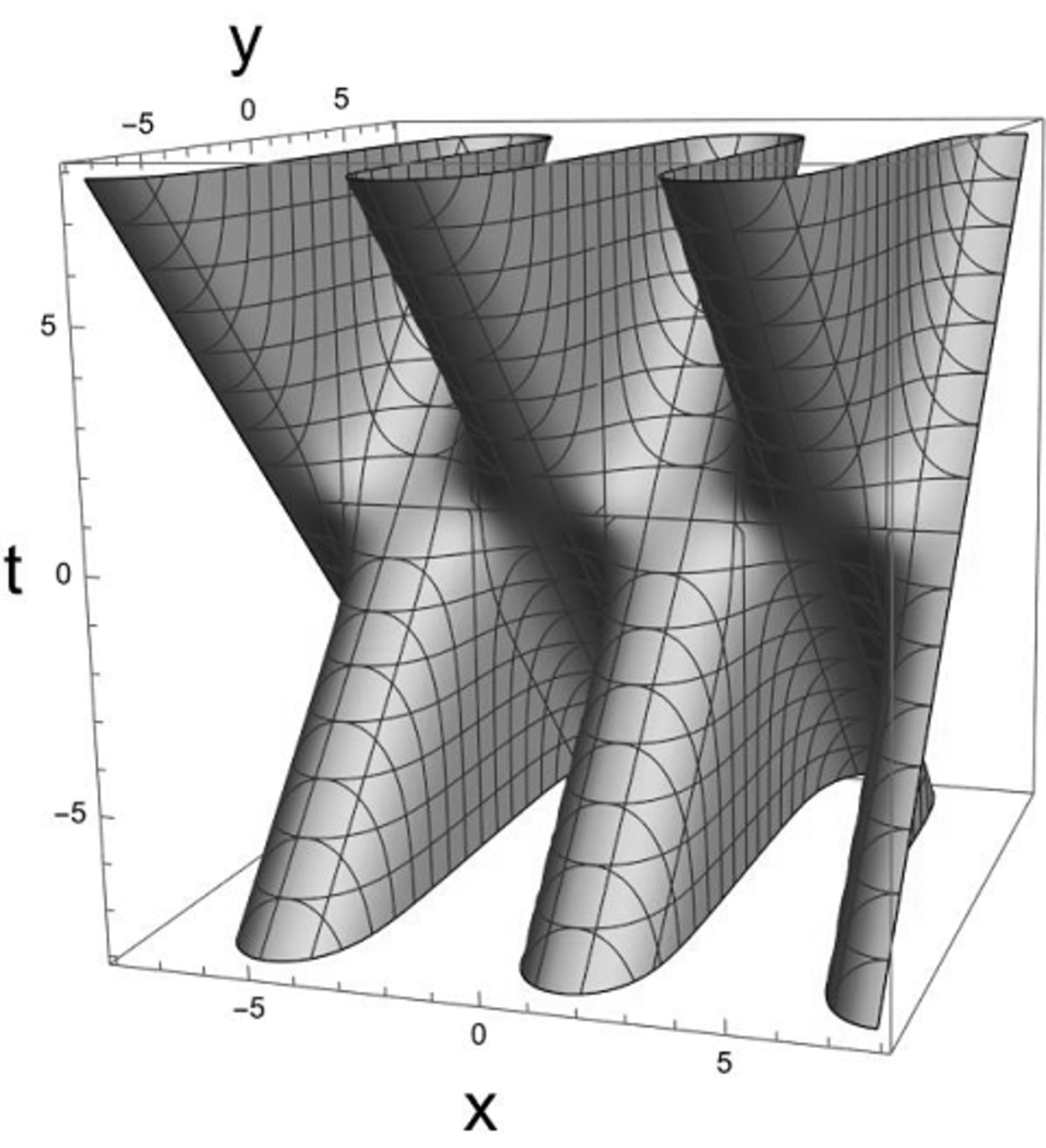} \qquad
    \includegraphics[height=38mm]{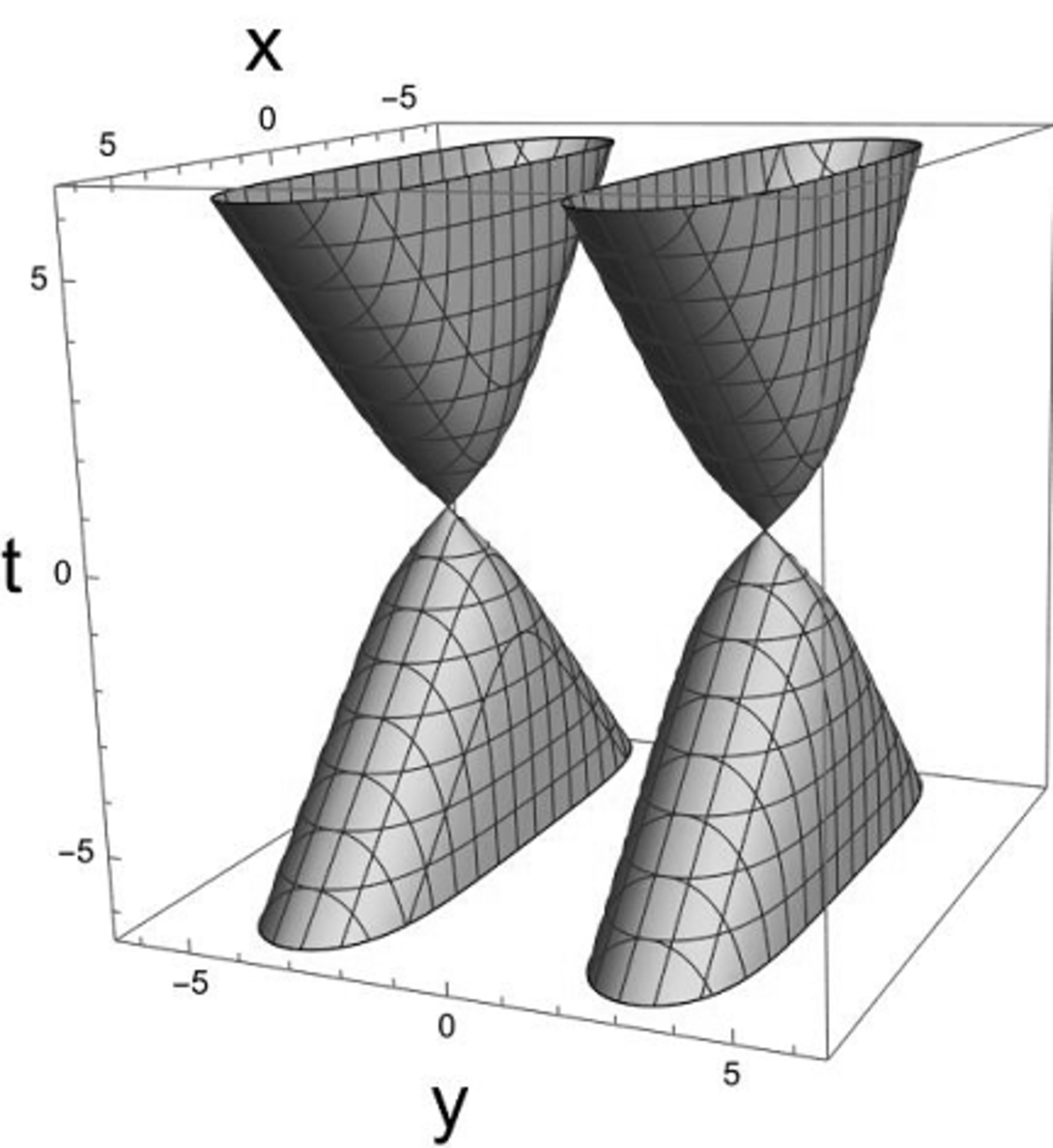}
    \caption{The Scherk-type surfaces $\mathcal S_2$ (left) and $\mathcal S'_2$ (left)}
    \label{fig2}
  \end{center}
\end{figure}

We next substitute 
\eqref{eq:303}
to Konderak's third and fourth formulas: 
Since
\begin{align*}
  (-1-g^2,2jg,-1+g^2)\omega=\frac{1}{2}\left(\frac{1}{z+1}-\frac{1}{z-1},
\frac{2jz}{z^2+1}-\frac{2jz}{z^2-1},\frac{2}{z^2+1}\right),
\end{align*}
letting $z=u+jv$ ($u,v\in \R$)
and integrating it,
we obtain
\begin{align}\label{eq:697}
  F_3+jF_4=\frac{1}{2}\left
(\log\left(\frac{z+1}{z-1}\right),j\log\left(\frac{z^2+1}{z^2-1}\right),2\arctan z\right).
\end{align}
Then we have (cf. \eqref{eq:AT})
\begin{align*}
F_3=\frac12 \left(
\log \sqrt{N^2(A(z))},\argh{A(z^2)},\arctan(u+v)+\arctan(u-v)
\right).
\end{align*}
If we write $2F_3=(T_3,X_3,Y_3)$,
then 
\begin{align*}
&T_3=\log\sqrt{\left|\lnorm{A(z)}\right|},\quad
X_3=\argh{A(z^2)}, \\
&Y_3=\arctan(u+v)+\arctan(u-v).
\end{align*}
Imitating the computations
in \eqref{eq:551}, we have
\begin{equation}\label{eq:750}
\cos Y_3=
\frac{1-\lnorm{z}}{\sqrt{|\lnorm{z^2+1}|}}.
\end{equation}
Imitating the computations
in \eqref{eq:565} and
\eqref{eq:586},
we have
\begin{equation}\label{eq:756}
\frac{|1+N^2(z)|}{\sqrt{|\lnorm{z^2-1}|}}=
\begin{cases}
\cosh T_3 & \text{if $z\in D_+$}, \\
|\sinh T_3| & \text{if $z\in D_-$}.
\end{cases}
\end{equation}
On the other hand, 
imitating the computations of
\eqref{eq:624a} and 
\eqref{eq:647a},
we have
$$
\frac{|1-N^2(z)^2|}{\sqrt{|\lnorm{z^4-1}|}}
=
\begin{cases}
\cosh X_3 & \text{if $z\in D_+$}, \\
|\sinh X_3| & \text{if $z\in D_-$}.
\end{cases}
$$
Thus,
$2F_3(D_+)$ is contained in the set
$$
\mathbb S_3=\{(t,x,y)\in \R^3_1\,;\, \cosh^2 t \cos^2 y=\cosh^2 x\},
$$
and $2F_3(D_-)$ is a subset of
$$
\mathbb S'_3=\{(t,x,y)\in \R^3_1\,;\, \sinh^2 t \cos^2 y=\sinh^2 x\}.
$$
We remark that $\mathbb S_3$ (resp. $\mathbb S'_3$) is
congruent to $\mathbb S'_2$ (resp. $\mathbb S_2$).

We next set $2F_4=(T_4,X_4,Y_4)$,
then 
\begin{align*}
&T_4=\argh{A(z)},\quad
X_4=\log\sqrt{\left|\lnorm{A(z)}\right|}, \\
&Y_4=\arctan(u+v)-\arctan(u-v).
\end{align*}
By \eqref{eq:624a} and \eqref{eq:647a},
we have
$$
\frac{|j(\bar z-z)|}{\sqrt{|\lnorm{z^2-1}|}}
=
\begin{cases}
|\sinh T_4| & \text{if $z\in D_+$}, \\
\cosh T_4 & \text{if $z\in D_-$}.
\end{cases}
$$
Applying
\eqref{eq:N3}
and imitating 
\eqref{eq:652},
we have
$$
|\sin Y_4|=\frac{|j(\bar z-z)|}{\sqrt{|N^2(z^2+1)|}}.
$$
Thus, $2F_4(D_+)$ is a subset of
\begin{equation}
\mathbb S_4:=\left\{(t,x,y)\in\mathbb{R}^3_1\,\,;\,\,\sinh^2 t=e^{2x}\sin^2 y\right\},
\end{equation}
and 
$2F_4(D_-)$ is a subset of
$$
\mathbb  S'_4:=\left\{(t,x,y)\in\mathbb{R}^3_1\,\,;\,\,\cosh^2 t=e^{2x}\sin^2 y\right\}.
$$
We remark that $\mathbb S_4$ (resp. $\mathbb S'_4$) is
congruent to $\mathbb S'_1$ (resp. $\mathbb S_1$).

\begin{remark}\label{rmk;six}
As seen in the above, by Konderak's four formulas with the
data \eqref{eq:303}, we constructed 
eight ZMC-surfaces $\mathcal S_k$ and $\mathcal S'_{k}$ $(k=1,\ldots,4)$
in total,
which are variants of the Scherk surface. 
Amongst them,
there are four mutually non-congruent 
connected ZMC-surfaces
\begin{align*}
\hat S_1&:=\{\cosh t=e^y \cos x,\,\, |x|<\pi/2 \},\\
\mc S'_1&:=\{\sinh t=e^{y}\cos x \},\\
\mc S_2&:=\{\sinh y=\sinh t \sin x\},\\
\hat S'_2&:=\{\cosh y=\cosh t \sin x,\,\, 0<x<\pi\}.
\end{align*}
The surfaces $\mc S_1$ and $\mc S'_1$ 
are given in  Goemans \cite[Figures 2.8 and 2,9]{G}
and also in Kim and Ogata \cite[Theorem 3.2]{O}
using different methods.
The surface $\mc S_2$
is given in
Akamine and Singh $($\cite[Figure 4 (b)]{A}$)$, 
and the surface $\mc S'_2$ is the same as in Kaya and Lopez 
$($\cite[Example 3]{Ka}$)$.
\end{remark}

\noindent
{\it Proof of Theorem~\ref{thm:173}.}\,
The surface given in \eqref{eq:692}
can be expressed as 
$F(x,y):=(f(x,y),x,y)$ ($x,y\in \R^2$),
where
$$
f(x,y):=\op{arcsinh}(e^{y}\cos x).
$$
It can be easily checked that $F$ has no umbilics.
Since any Kobayashi surface of order $n$ ($n\ge 2$)
has at least $2(n-2)$ umbilics (cf. \cite[(3.6)]{FKKRUY} at their
space-like parts, the only possibility is $n=2$.
Since
$$
1-f_x^2-y_y^2=\frac{2 - e^{2y} + e^{2y} \cos(2x)}{2 + e^{2y} + e^{2y} \cos(2x)},
$$
the set of space-like points of $F$ is given by
$$
D:=\left\{(x,y)\in \R^2\,;\, e^{2y}<\frac{2}{1-\cos{2x}}\right\},
$$
which is connected, and $\R^2\setminus D$ consists of infinite number
of time-like components, which makes
a contradiction, since 
every Kobayashi surface of order $2$ has 
at most four time-like components.
\qed

\begin{remark}\label{rmk:space-like}
Kobayashi \cite{Kob} gave two Weierstrass type formulas, one is
in \cite[(1.2)]{Kob} and the other is in \cite[(1.3)]{Kob} for space-like
maximal surfaces.
If we use the Weierstrass data
\eqref{eq:303}
as a pair of holomorphic functions,
then the first formula \cite[(1.2)]{Kob} produces a space-like 
ZMC-surface $G_1(u,v)$ whose image is a subset of  
\begin{equation}
\mc K_1:=\{(t,x,y)\in \R^3\,;\, e^t \cosh x=\cosh y\},
\end{equation}
which is an entire graph of mixed-type found by Kobayashi \cite{Kob},
and is a typical example of Kobayashi surfaces 
$($which is the case $n=2$ of
\cite[Example 3.6]{FKKRUY}$)$. 
The conjugate surface $G_2(u,v)$ 
of $G_1(u,v)$ is a subset of
\begin{equation}
\mc K_2:=\{(t,x,y)\in \R^3\,;\, \sin t=\sin x\sin y\},
\end{equation}
which is a triply periodic ZMC-surface with cone-like singular points
found by \cite{FKKRSTUYY}.
On the other hand,
the ZMC-surface $G_3(u,v)$
produced by the second formula \cite[(1.3)]{Kob}
is a subset of
\begin{equation}
\mc K_3:=\{(t,x,y)\in \R^3\,;\, \cos t \cosh x=\cos y\},
\end{equation}
which does not contain any time-like points.
The conjugate surface $G_4(u,v)$ 
of $G_3(u,v)$ is a subset of
\begin{equation}
\mc K_4:=\{(t,x,y)\in \R^3\,;\, \sinh t=-e^y \sin x\},
\end{equation}
which is an entire graph of mixed-type and is congruent to
$\mc S_4$ as above. So, one can prove 
Theorem~\ref{thm:173} using $\mc K_4$.
\end{remark}

\begin{remark}
Consider a graph 
$F(x,y)=(f(x,y),x,y)$, where $f$ is a real analytic function on $\R^2$.
If the mean curvature of $F$ identically vanishes on its time-like part $U$
$($resp. space-like part $V)$,
then $f$ satisfies $($cf. \cite{M} and \cite{Kob}$)$
\begin{equation}\label{eq:514}
(1-f_y^2)f_{xx}+2 f_xf_yf_{xy}+(1-f_x^2)f_{yy}=0
\end{equation} 
not only on on $U$ $($resp. $V)$
but also
on $\R^2$,  
since $f$ is real analytic.
In particular,
$F$ is a ZMC-graph on $\R^2$.
Moreover, we suppose that
$\Psi:\R^3_1\to \R$ is a real analytic function
and each connected component
of 
$$
Z(\Psi):=\{(t,x,y)\in \R^3_1\,;\, \Psi(t,x,y)=0\}
$$ 
contains a time-like $($resp. space-like$)$ ZMC-surface. 
Then the space-like $($resp. time-like$)$
parts of $Z(\Psi)$ also give ZMC-surfaces:
In fact, around a light-like regular point,
$Z(\Psi)$ can be locally expressed as a graph 
over a space-like plane in $\R^3_1$, and
the statement reduces to the case of 
the ZMC-graphs as above.
In particular,
$\mc E_4$, $\mc C_2$
$\mc S'_1$ and $\mc K_1$ 
are ZMC-surfaces 
on both space-like and time-like parts.
\end{remark}

\begin{acknowledgements}
  The authors would like to express 
  their deep gratitude to the reviewer, Shintaro Akamine, Shoichi Fujimori,
Atsufumi Honda, Naoya Ando, Kotaro Yamada and  Shunsuke Ichiki for valuable comments.
\end{acknowledgements}

\appendix
\section{Properties of para-holomorphic functions}

In this appendix, 
we introduce various properties of 
para-holomorphic functions.
Although the authors are aware that an excellent introduction of this 
subject has already been given in \cite{TKL},
the content of this paper is slightly different
(for example, we do not use \lq\lq para-complex limit", but 
apply only the para-Cauchy-Riemann equation as in \eqref{eq:PCR}).
The authors hope that 
this appendix with the reference \cite{TKL} 
will be a help of the readers' understanding.

Let $U$ be an open subset of $\check \C$ and $\phi:U\to \check \C$ 
a smooth function.
We can write 
$$
\phi(u+jv)=X(u,v)+j Y(u,v),
$$ 
where $X:=\re(\phi)$ and $Y:=\im(\phi)$. 
If we set
$$
\phi_z:=\frac12 (\phi_u+j \phi_v),\qquad
\phi_{\bar z}:=\frac12 (\phi_u-j \phi_v),
$$
then $\phi_{\bar z}$ vanishes  if and only if $\phi$ is
para-holomorphic (cf. \eqref{eq:PCR}). 
In this case, the differential of $\phi$
as a para-holomorphic function 
is defined by 
\begin{equation}\label{eq:P}
\phi':=X_u+j Y_u (=\phi_z).
\end{equation}
If $f_i$ ($i=1,2$) are smooth functions defined 
on open intervals $I_i(\subset \R)$ respectively.
Then
$$
\phi(u,v):=
\frac{f_1(u+v)+f_2(u-v)}2+j\frac{f_1(u+v)-f_2(u-v)}2
\qquad (u\in I_1,\,\, v\in I_2)
$$
is a typical example of para-holomorphic function.
Conversely, the following assertion holds:

\begin{lemma}\label{fact:A1}
Let $\phi$ be a para-holomorphic function on $U$.
Then, for each $z_0\in U$, there exist $\delta>0$
and two smooth functions $f_i:(-\delta,\delta)\to \R$
$(i=1,2)$ satisfying $f_i(0)=0$ such that 
$$
\{z_0+(u+v)+j(u-v)\in \check \C\,;\, |u|,|v|<\delta\}
$$
is contained in $U$ and
\begin{equation}\label{eq:1105}
\phi(u,v)=\phi(z_0)+\frac{f_1(u+v)+f_2(u-v)}2+j
\frac{f_1(u+v)-f_2(u-v)}2
\end{equation}
holds for each $u,v\in (-\delta,\delta)$.
\end{lemma}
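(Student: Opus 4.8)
The plan is to obtain the decomposition directly from the para-Cauchy--Riemann equations \eqref{eq:PCR}, reversing the computation that shows the example displayed just before the lemma is para-holomorphic. Translating so that $z_0$ corresponds to the origin, I parametrise points near $z_0$ by $(u,v)$ and write $\phi=X+jY$ with $X,Y$ real; since \eqref{eq:PCR} is translation invariant, $X$ and $Y$ still satisfy $X_u=Y_v$ and $Y_u=X_v$. Adding and subtracting these equations gives
\begin{equation*}
(X+Y)_u=(X+Y)_v,\qquad (X-Y)_u=-(X-Y)_v.
\end{equation*}
Thus $X+Y$ is annihilated by the characteristic field $\partial_u-\partial_v$ and $X-Y$ by $\partial_u+\partial_v$. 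As these fields are tangent, respectively, to the lines $u+v=\mathrm{const}$ and $u-v=\mathrm{const}$, the function $X+Y$ is constant along the former family and $X-Y$ along the latter.

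To convert this into honest functions of one variable I would use that $U$ is open to pick $\delta>0$ so small that the square neighbourhood of $z_0$ appearing in the statement lies in $U$. On such a (convex) neighbourhood each value of $u+v$, and each value of $u-v$, is attained along a single connected segment, so the constancy just established produces smooth one-variable functions $\Psi$ and $\Theta$, defined on $(-\delta,\delta)$ after the centring, with $X+Y=\Psi(u+v)$ and $X-Y=\Theta(u-v)$; the smoothness of $\Psi,\Theta$ is inherited from $\phi$ by restricting to a line transverse to the relevant characteristic direction. Solving for $X$ and $Y$ yields
\begin{equation*}
\phi=\frac{\Psi(u+v)+\Theta(u-v)}{2}+j\,\frac{\Psi(u+v)-\Theta(u-v)}{2}.
\end{equation*}

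It remains to normalise. Because the centring makes $u+v$ and $u-v$ vanish at $z_0$, I set $f_1:=\Psi-\Psi(0)$ and $f_2:=\Theta-\Theta(0)$, which are smooth on $(-\delta,\delta)$ and satisfy $f_1(0)=f_2(0)=0$. Substituting $\Psi=\Psi(0)+f_1$ and $\Theta=\Theta(0)+f_2$ into the last display and separating the constant terms, the constants reassemble into $\phi(z_0)=\tfrac12(\Psi(0)+\Theta(0))+\tfrac{j}{2}(\Psi(0)-\Theta(0))$ and what is left is precisely the right-hand side of \eqref{eq:1105}. This proves the lemma.

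I do not expect a genuine obstacle. The entire content is local exactly because of the one delicate point: passing from ``annihilated by a characteristic derivative'' to ``a function of the complementary characteristic coordinate'' needs the domain to be a characteristic-convex rectangle, which fails for a general open $U$; this is why the representation is asserted only on the square neighbourhood of $z_0$ in the statement. Everything else is the routine additive bookkeeping that enforces $f_1(0)=f_2(0)=0$ and extracts the constant $\phi(z_0)$.
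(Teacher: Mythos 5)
Your argument is correct. It differs from the paper's proof in the way the characteristic decomposition is extracted. The paper passes to second order: from the para-Cauchy--Riemann equations it derives the wave equations $X_{uu}-X_{vv}=Y_{uu}-Y_{vv}=0$, applies d'Alembert in the coordinates $x=(u+v)/2$, $y=(u-v)/2$ to write $X=\alpha_1(x)+\alpha_2(y)$ and $Y=\beta_1(x)+\beta_2(y)$ with four unknown one-variable functions, and then feeds the first-order equations back in to force $\alpha_1=\beta_1$ and $\alpha_2=-\beta_2$. You instead diagonalize the first-order system directly: adding and subtracting \eqref{eq:PCR} shows that $X+Y$ and $X-Y$ each satisfy a single transport equation along a characteristic direction, so each is a function of one null coordinate, and no elimination step is needed. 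Your route is shorter and is in effect the scalar form of the idempotent decomposition $\phi=\epsilon_1\phi+\epsilon_2\phi$ (note $\epsilon_1\phi=\epsilon_1(X+Y)$, $\epsilon_2\phi=\epsilon_2(X-Y)$) that the paper only introduces later, in the proof of Proposition \ref{label:prop1092}; the paper's route makes the analogy with harmonicity of the components of a holomorphic function explicit. You also correctly isolate the one genuine hypothesis needed in either argument, namely that the neighbourhood be a characteristic rectangle so that each characteristic slice is connected; the paper uses the same restriction implicitly when it integrates $X_{xy}=0$. The only cosmetic slip is the domain bookkeeping: since $u,v\in(-\delta,\delta)$, the arguments $u\pm v$ of $f_1,f_2$ range over $(-2\delta,2\delta)$, so one should either define the $f_i$ there or shrink $\delta$; the paper's own statement has the same wrinkle, and it is harmless.
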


\begin{proof}
We set $\phi=X+jY$, and
assume that $z_0=(0,0)$ and $\phi(z_0)=0$ without loss of generality.
Since $X_u=Y_v$ and $Y_v=X_u$, we have
$$
X_{uu}-X_{vv}=0,\qquad Y_{uu}-Y_{vv}=0.
$$
We set $u=x+y$ and $v=x-y$, then we have
$X_{xy}=Y_{xy}=0$.
So we can write
$X=\alpha_1(x)+\alpha_2(y)$ and $Y=\beta_1(x)+\beta_2(y)$,
where $\alpha_i$ and $\beta_i$ are smooth functions 
satisfying $\alpha_i(0)=\beta_i(0)=0$ for $i=1,2$. 
Applying \eqref{eq:PCR} and 
$x=(u+v)/2,\,\, y=(u-v)/2$, we have
\begin{align*}
&\frac{\alpha'_1(\frac{u+v}2)+\alpha'_2(\frac{u-v}2)}2=X_u=Y_v
=\frac{\beta'_1(\frac{u+v}2)-\beta'_2(\frac{u-v}2)}2 
\end{align*}
By setting $u=v$ or $v=-u$, we have
$\alpha'_1(u)=\beta'_1(u)$
and $\alpha'_2(u)=\beta'_2(u)$.
Since $\alpha_i(0)=\beta_i(0)=0$ for $i=1,2$,
we have $\alpha_1=\beta_1$ and $\alpha_2=-\beta_2$.
Setting $f_i(t):=\alpha_i(t/2)$ ($i=1,2$), we obtain
the conclusion.
\end{proof}

\begin{proposition}\label{label:prop1092}
Let $\phi$ be a para-holomorphic function on $U$
and set $x:=(u+v)/2,\,\, y:=(u-v)/2$.
Then there exists a pair of smooth functions 
$f_1(x)$ and $f_2(y)$ of one variable
such that
$$
\phi(u,v)=
\frac{f_1(u+v)+f_2(u-v)}2+j\frac{f_1(u+v)-f_2(u-v)}2
\qquad ((u,v)\in U).
$$
\end{proposition}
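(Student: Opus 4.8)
The plan is to extract the splitting from the para-Cauchy--Riemann equations pointwise and then globalize. Write $\phi=X+jY$ with $X=\re(\phi)$ and $Y=\im(\phi)$, and introduce the characteristic combinations $P:=X+Y$ and $Q:=X-Y$. Using \eqref{eq:PCR} in the form $X_u=Y_v$ and $Y_u=X_v$, a one-line computation gives $P_u=X_u+Y_u=Y_v+X_v=P_v$ and $Q_u=X_u-Y_u=Y_v-X_v=-Q_v$ on all of $U$. Passing to $s=u+v$, $t=u-v$, so that $\partial_u=\partial_s+\partial_t$ and $\partial_v=\partial_s-\partial_t$, these identities become $\partial_t P=0$ and $\partial_s Q=0$. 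Thus $P$ is locally independent of $t$ and $Q$ is locally independent of $s$; this is exactly the infinitesimal content isolated in the proof of Lemma~\ref{fact:A1}, where the relations $\alpha_1=\beta_1$ and $\alpha_2=-\beta_2$ say precisely that $X+Y$ depends on $x$ alone and $X-Y$ on $y$ alone.

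Granting for the moment that $P$ and $Q$ are genuinely single-valued functions of $u+v$ and of $u-v$, I would simply set $f_1(u+v):=P=X+Y$ and $f_2(u-v):=Q=X-Y$; smoothness of $f_1$ and $f_2$ is inherited from the local representation furnished by Lemma~\ref{fact:A1}. The asserted identity is then immediate bookkeeping, since $\tfrac12\bigl(f_1(u+v)+f_2(u-v)\bigr)=X$ and $\tfrac12\bigl(f_1(u+v)-f_2(u-v)\bigr)=Y$ recombine to $\phi=X+jY$ in the stated form.

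The main obstacle is precisely the single-valuedness just assumed, i.e.\ the globalization step. The identity $\partial_t P=0$ only forces $P$ to be constant along each connected arc of a characteristic slice $U\cap\{u+v=c\}$; to define $f_1(c)$ as that common value unambiguously one needs every such slice to be connected, and symmetrically for the slices $U\cap\{u-v=c\}$ used to define $f_2$. This connectedness is automatic on the coordinate neighbourhoods produced by Lemma~\ref{fact:A1} and on the product-type domains (rectangles in the $(x,y)$-coordinates) to which the representation formulas are applied, where each slice is a single interval; on such $U$ the common-value definition is well posed and the local splittings patch into the global functions $f_1$, $f_2$. I would therefore carry out the argument for $U$ of this form, and expect the only real work beyond the routine differentiations to be the verification that the relevant slices are connected, so that the local-to-global passage is legitimate.
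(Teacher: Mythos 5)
Your argument is in substance the same as the paper's: the paper multiplies $\phi$ by the idempotents $\epsilon_1=(1+j)/2$ and $\epsilon_2=(1-j)/2$, and by \eqref{eq2E} these projections are $\epsilon_1\phi=\epsilon_1(X+Y)$ and $\epsilon_2\phi=\epsilon_2(X-Y)$, i.e.\ exactly your characteristic combinations $P$ and $Q$; both proofs then rest on the local splitting (Lemma~\ref{fact:A1}, or equivalently your direct computation $\partial_tP=\partial_sQ=0$ from \eqref{eq:PCR}) followed by a patching step. The difference lies in how that last step is handled, and there your version is the more careful one. The paper argues that, since $\{\epsilon_1,\epsilon_2\}$ is a basis, the local data $f_1(u+v)$ and $f_2(u-v)$ are uniquely determined and therefore the local expression \eqref{eq:1105} ``is global''. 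As you point out, uniqueness on overlaps only produces a well-defined value of $f_1$ on each connected component of a slice $U\cap\{u+v=c\}$, so single-valuedness of $f_1$ and $f_2$ requires the characteristic slices of $U$ to be connected. This is not automatic even for simply connected $U$: on $U=\{(u,v):\ u+v\neq 0 \text{ or } u-v>0\}$ take $P\equiv 0$ and let $Q$ equal a bump function of $u-v$ supported in $\{u-v<-1\}$ on the component where $u+v>0$ and equal $0$ where $u+v<0$; then $\phi=\tfrac12 Q-\tfrac{j}{2}Q$ is para-holomorphic on $U$ but admits no single-valued $f_2$. So the proposition as literally stated needs the slice-connectedness hypothesis you impose; this is harmless for the paper, whose applications only involve product-type domains such as $U_I$, but your explicit identification of the obstruction and of the restriction that removes it is an improvement on the published proof rather than a gap in yours.
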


\begin{proof}
We set 
\begin{equation}\label{eq:1080}
\epsilon_1:=\frac{1+j}2,\qquad \epsilon_2:=\frac{1-j}2.
\end{equation}
Then it holds that
\begin{equation}\label{eq1E}
(\epsilon_1)^2=\epsilon_1,\quad (\epsilon_2)^2=\epsilon_2,\quad
\epsilon_1\epsilon_2=0
\end{equation}
and
\begin{equation}\label{eq2E}
\epsilon_1(s+jt)=\epsilon_1(s+t),\quad \epsilon_2(s+jt)=\epsilon_2(s-t)
\qquad (s,t\in \R),
\end{equation}
that is, the multiplications of $\epsilon_i$ ($i=1,2$) play the role 
of linear projections of $\check \C$ (thinking it as the Lorentz-Minkowski
$2$-plane) into the two light-like lines.
By the local expression
\eqref{eq:1105}, we have that
\begin{align}\nonumber
\phi(z)&=(\epsilon_1+\epsilon_2)\phi(z)
 \\ \label{eq:1110}
&=\epsilon_1 (c_1+f_1(u+v))+\epsilon_2 (c_2+f_2(u-v)),
\end{align}
where $c_i$ ($i=1,2$) are real numbers defined by
$$
c_1:=\op{Re}(\phi(p))+\op{Im}(\phi(p)),\qquad
c_2:=\op{Re}(\phi(p))-\op{Im}(\phi(p)).
$$
Since $\{\epsilon_1,\epsilon_2\}$ is a basis of $\check \C$ as a 
$2$-dimensional real vector space, $f_1(u+v)$ and $f_2(u-v)$ are uniquely determined. 
This implies that
the expression
\eqref{eq:1105} is global.
So replacing $(c_1+f_1,c_2+f_2)$ by $(f_1,f_2)$,
we obtain the conclusion.
\end{proof}

By definition, one can easily check the following (cf. \cite{Kon}):
\begin{enumerate}
\item Suppose that 
$\phi_k(z)$ ($k=1,2$)
are para-holomorphic functions on $U$.
Then $\phi_1+\phi_2$ and $\phi_1\phi_2$ are
para-holomorphic functions on $U$ and
$$
(\phi_1+\phi_2)'=\phi'_1+\phi'_2,\qquad 
(\phi_1\phi_2)'=\phi'_1\phi_2+\phi_1\phi'_2.
$$
\item Suppose that 
$\phi(z)$ is para-holomorphic on $U$.
If $\lnorm{\phi(z)}\ne 0$ for each $z\in U$, then $1/\phi$ is 
a  para-holomorphic function on $U$, and
satisfies  $(1/\phi)'=-\phi'/\phi^2$.
\item Let $U,V$ be open subsets on $\check \C$, and
$\phi:U\to V$ and $\psi:V\to \check C$ are
para-holomorphic functions.
Then $\psi\circ \phi$ is also para-holomorphic and satisfies
$(\psi\circ \phi)'(z)=\psi'(\phi(z))\phi'(z)$ on $U$.
\end{enumerate}
Since these statements can be shown in almost the same way, 
we show only (3). We set $\phi=X+jY$ and $\psi=A+jB$
where $X,Y$ and $A,B$ are real-valued functions.
If we set $\alpha:=\re{(\psi\circ \phi)}$ and $\beta:=\im{(\psi\circ \phi)}$.
Then 
$$
\alpha_u=A(X,Y)_u=A_XX_u+A_YY_u=B_YY_v+B_XX_v=B(X,Y)_v=\beta_v
$$
holds. Similarly, $\alpha_v=\beta_u$ is proved. So $\psi\circ \phi$ is
para-holomorphic on $U$, and
\begin{align*}
\alpha_u+j \beta_u&=
A_XX_u+A_YY_u+j(B_XX_u+B_YY_u) \\
&=A_XX_u+B_XY_u+j(B_XX_u+A_XY_u)
=(A_X+jB_X)(X_u+jY_u),
\end{align*}
proving the assertion.

We also show the following:

\begin{proposition}\label{eq:909}
Let $\phi(z)$ be a para-holomorphic function on a domain  $U$ in $\check \C$.
Then $\phi(z)dz$ is a closed $1$-form $U$. In particular, if $U$ is simply connected
and $z_0\in U$, then the line-integral
\begin{equation}\label{eq:914}
(\Phi(z)=)\int_{z_0}^z \phi(z)dz:=\int_\gamma (Xdu+Ydv)+j(Ydu+Xdv)\quad (z\in U)
\end{equation}
is well-defined, where $\gamma$ is a piece-wise smooth curve on $U$ 
from $z_0$ to $z$.
Moreover, $\Phi(z)$ is 
a para-holomorphic function satisfying $\Phi'(z)=\phi(z)$.
\end{proposition}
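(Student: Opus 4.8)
The plan is to reduce everything to the ordinary Poincaré lemma for real $1$-forms by splitting the para-complex $1$-form $\phi\,dz$ into its real and imaginary parts. First I would write $\phi=X+jY$ and expand $\phi\,dz=(X+jY)(du+j\,dv)$ using $j^2=1$; this reproduces the expression $(X\,du+Y\,dv)+j(Y\,du+X\,dv)$ appearing in \eqref{eq:914}, so that $\phi\,dz=\omega_1+j\omega_2$ with $\omega_1:=X\,du+Y\,dv$ and $\omega_2:=Y\,du+X\,dv$ being honest real-valued $1$-forms on $U$.

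Next I would verify that each of $\omega_1,\omega_2$ is closed. Computing exterior derivatives gives $d\omega_1=(Y_u-X_v)\,du\wedge dv$ and $d\omega_2=(X_u-Y_v)\,du\wedge dv$, both of which vanish precisely by the para-Cauchy-Riemann equations \eqref{eq:PCR}. Hence $\phi\,dz$ is closed. Since $U$ is simply connected, the Poincaré lemma applies to $\omega_1$ and $\omega_2$ separately, so the line integrals $P:=\int_\gamma\omega_1$ and $Q:=\int_\gamma\omega_2$ are independent of the choice of piecewise smooth path $\gamma$ from $z_0$ to $z$; this is exactly the well-definedness of $\Phi=P+jQ$ asserted in the statement.

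Finally I would identify $\Phi$ and compute its derivative. From $dP=\omega_1$ and $dQ=\omega_2$ one reads off $P_u=X$, $P_v=Y$, $Q_u=Y$, $Q_v=X$. These four relations give $P_u=Q_v$ and $Q_u=P_v$, which are exactly the para-Cauchy-Riemann equations \eqref{eq:PCR} for $\Phi$, so $\Phi$ is para-holomorphic. The derivative formula \eqref{eq:P} then yields $\Phi'=P_u+jQ_u=X+jY=\phi$, completing the argument.

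I do not anticipate a genuine obstacle here. The only point requiring care is the observation that a $\check\C$-valued $1$-form is closed if and only if both its real and imaginary parts are closed, which is immediate since $\{1,j\}$ is a real basis of $\check\C$; after that, everything follows from standard real calculus. In particular, no notion of para-complex limit is needed, in keeping with the approach announced at the start of this appendix.
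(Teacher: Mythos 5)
Your proposal is correct and follows essentially the same route as the paper's proof: split $\phi\,dz$ into the real $1$-forms $Xdu+Ydv$ and $Ydu+Xdv$, check closedness via the para-Cauchy--Riemann equations, invoke simple connectedness for path-independence, and read off $\Phi'=\phi$ from the component relations. The only difference is that you write out the exterior derivative computation explicitly, which the paper leaves implicit.
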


\begin{proof}
If we write $\phi(z)=X(u,v)+j Y(u,v)$, then we have
$$
\phi(z)dz=(Xdu+Ydv)+j(Ydu+Xdv).
$$
By
\eqref{eq:PCR},
the two
$1$-forms $Xdu+Ydv$ and $Ydu+Xdv$ are closed.
Moreover, since $U$ is simply connected,
the line-integral \eqref{eq:914}
is well-defined.
Then, the exterior derivative $d\Phi$ of
$\Phi$ as an $\R^2$-valued function
is equal to $(Xdu+Ydv,Ydu+Xdv)$.
So if we write $\Phi=A(u,v)+jB(u,v)$
where $A,B$ are smooth functions on $U$, then we have
$dA=Xdu+Ydv$ and $dB=Ydu+Xdv$, which imply that
$A_u=X$, $A_v=Y$ and $B_u=Y$, $B_v=X$.
So, we can conclude that $\Phi$
gives a para-holomorphic function on $U$ satisfying
$$
\Phi'=A_u+jB_u=X+jY=\phi,
$$
which proves the assertion.
\end{proof}

Since $\phi(z):=z$ ($z\in \check \C$)
is a para-holomorphic function,
any rational function of variable $z$ is a
para-holomorphic function on an open dense subset of $\check \C$. 
Let $I$ be an open subset of $\R$ (the set $I$ may or may not be an
interval on $\R$).
Consider a smooth function $f:I\rightarrow\R$.
We consider an open subset 
$$
U_I:=\{u+jv\in \check \C\,;\, u+v,u-v\in I\}.
$$
For $z=u+jv\in U_I$, we set
\begin{align}\label{eq;447}
  \tilde{f}(z):=\frac{f(u+v)+f(u-v)}2+j\frac{f(u+v)-f(u-v)}{2},
\end{align}
which is a para-holomorphic function on $U_I$.

\begin{remark}
To extend a real-valued function $h(u)$ on the real axis
to a holomorphic function, $h(u)$ must be a 
real analytic function, 
and its extension is given using a power series. 
However, i n order to extend $f(u)$  as a para-holomorhic function, 
the real analyticity of $f$ is not necessary, and
the extension $\tilde f$ can be defined by \eqref{eq;447}.
In particular, if $f(u)$ is defined on an open dense set $I$ in $\R$, 
then the domain $U_I$  of the definition $\tilde f(z)$ 
is also an open dense subset of $\check \C$.
\end{remark}

For example, by \eqref{eq;447}, we have
\begin{equation}
(\exp(z)=)e^z=\frac{e^{u+v}+e^{u-v}}2+j\frac{e^{u+v}-e^{u-v}}2
=e^u(\cosh v+j \sinh v).
\end{equation}
Using this, one can easily prove the following:
\begin{equation}
\exp(z)\exp(w)=\exp(z+w)\qquad (z,w\in \check \C).
\end{equation}
On the other hand, since
$\log|x|$ is defined on $I:=\R\setminus \{0\}$,
\begin{align}\label{eq:Log0}
\log z&:=
\log \sqrt{|\lnorm{z}|}+j\log\left(\sqrt{\frac{|u+v|}{|u-v|}}\right)
\end{align}
is a para-holomorphic function on $\check \C_*
:=\{z\in \check \C\,;\, \lnorm{z}\ne 0\}$.
One can directly check that
\eqref{eq:Log0} implies the formula
given in \eqref{eq:302} 
by dividing the case in terms of the signs of $u^2-v^2$,  $u$ and $v$.
Moreover,
\begin{equation}\label{eq:log}
\Big(\log(z\pm 1)\Big)'=\frac1{z\pm 1},\qquad
\Big(\log(z^2\pm 1)\Big)'=\frac{2z}{z^2\pm 1}
\end{equation}
hold whenever $z\pm 1\in \check \C_*$ or $z^2\pm 1\in \check \C_*$,
respectively.
One can easily check that 
\begin{equation}\label{eq:1247}
\log \circ \exp (z)=z \qquad (z\in \check \C)
\end{equation}
and
\begin{equation}\label{eq:1251}
\exp \circ \log (z)=z \qquad (\text{if $N^2(z)>0$ and $\re(z)>0$}).
\end{equation}
However,
$\exp \circ \log (z)=z$ does not hold in general.
For example,  
$$
\exp \circ \log(e^s(\sinh t+j\cosh t))=e^s(\cosh t+j \sinh t)
\qquad (s,t\in \R).
$$

One can easily check the following:
\begin{enumerate}
\item[(i)] Let $f_k$ ($k=1,2$) be two smooth functions 
defined on an open set $I$ in $\R$. Then 
$\widetilde{f_1+f_2}=\tilde f_1+\tilde f_2$ and 
$\widetilde{f_1f_2}=\tilde f_1\tilde f_2$ hold on $U_I$.
\item[(ii)] Let $f$ be a smooth function defined on an
open set $I$ in $\R$. If $f(u)\ne 0$ for each  $u\in I$,
then
$\widetilde{1/f}(z)=1/\tilde f(z)$ holds for $z\in U_I$.
\item[(iii)]
Let $f$ be a smooth function defined on an
open set $I$ in $\R$. 
If we set $g:=df/du$, then $\tilde{f}$ is para-holomorphic and
$(\tilde{f})'=\tilde g$ holds on $U_I$.
\end{enumerate}
We here prove only (iii):
Letting $\tilde{f}(z):=X(u,v)+jY(u,v)$, we have 
$X_u=Y_v$ and $X_v=Y_u$.
 Thus, $\tilde{f}(z)$ is para-holomorphic.
By \eqref{eq:P} with \eqref{eq;447}, we obtain 
$(\tilde{f})'=\tilde g$.

\begin{proposition}\label{prop:894}
Let $P(u)$ and $Q(u)$ be polynomials in $u$.
Then $\tilde f(z)$ 
is obtained by substituting $z$ into 
$f(u):=P(u)/Q(u)$, that is, 
$\tilde f(z)=f(z)$
holds whenever $\lnorm{Q(z)}\ne 0$.
\end{proposition}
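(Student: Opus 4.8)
The plan is to regard the extension operation $f\mapsto\tilde f$ of \eqref{eq;447} as a ring homomorphism and to check that, on rational functions, it is forced to agree with substitution of the para-complex variable $z$. Properties (i) and (ii) established above supply exactly the additivity and multiplicativity that are needed, so the whole argument reduces to verifying two base cases and then reconciling the domains of definition.

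First I would dispose of the base cases. For a constant $f(u)\equiv c$, formula \eqref{eq;447} gives $\tilde f(z)=c$, while for the identity $f(u)=u$ it gives
\[
\tilde f(z)=\frac{(u+v)+(u-v)}2+j\,\frac{(u+v)-(u-v)}2=u+jv=z ,
\]
so in both cases $\tilde f(z)$ equals the substitution $f(z)$. Using the multiplicativity $\widetilde{f_1f_2}=\tilde f_1\tilde f_2$ from (i), an induction yields $\widetilde{u^k}(z)=z^k$, and then additivity from (i) gives $\tilde P(z)=P(z)$ for every polynomial $P$. This settles the polynomial case.

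For the denominator I would first record the relevant domain identity. Writing $z=u+jv$ and using the idempotents of \eqref{eq:1080}, the relations \eqref{eq1E}--\eqref{eq2E} give $z=\epsilon_1(u+v)+\epsilon_2(u-v)$; since $\epsilon_1,\epsilon_2$ are orthogonal idempotents with $\bar\epsilon_1=\epsilon_2$, it follows that
\[
Q(z)=\epsilon_1 Q(u+v)+\epsilon_2 Q(u-v),\qquad
\lnorm{Q(z)}=Q(z)\,\overline{Q(z)}=Q(u+v)\,Q(u-v).
\]
Hence the condition $\lnorm{Q(z)}\ne 0$ is precisely $z\in U_I$ for $I:=\{u\in\R\,;\,Q(u)\ne 0\}$, which is exactly the domain on which (i) and (ii) apply. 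On this set, property (ii) gives $\widetilde{1/Q}(z)=1/\tilde Q(z)=1/Q(z)$ by the polynomial case, and a final use of multiplicativity on $f=P\cdot(1/Q)$ yields $\tilde f(z)=\tilde P(z)\,\widetilde{1/Q}(z)=P(z)/Q(z)=f(z)$, as required.

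The only delicate point I anticipate is this last piece of domain bookkeeping: one must ensure that the locus where the substituted function $P(z)/Q(z)$ fails to be defined coincides with the set $U_I$ on which $\widetilde{1/Q}$ is available, so that (ii) can legitimately be invoked. The factorization $\lnorm{Q(z)}=Q(u+v)Q(u-v)$ is exactly what reconciles the two descriptions, and everything else is a formal consequence of the homomorphism properties (i) and (ii).
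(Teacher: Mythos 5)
Your proof is correct and follows essentially the same route as the paper's: establish $\tilde P(z)=P(z)$ and $\tilde Q(z)=Q(z)$ from the base case $f_0(u)=u$ via property (i), identify $\{\lnorm{Q(z)}\ne 0\}$ with $U_I$ for $I=\{Q\ne 0\}$, and then combine (ii) for $1/Q$ with multiplicativity. Your explicit verification $\lnorm{Q(z)}=Q(u+v)\,Q(u-v)$ via the idempotents $\epsilon_1,\epsilon_2$ is a welcome addition, since the paper merely asserts this domain identification.
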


\begin{proof}
If we set $f_0(u):=u$, then we have 
$$
\tilde f_0(z)=u+jv=f_0(u+jv).
$$
By (i), we have $\tilde P(z)=P(u+jv)$ and
$\tilde Q(z)=Q(u+jv)$ for each $z\in \check \C$.
If we set $I:=\{u\in \R\,;\, Q(u)\ne 0\}$,
then we have $U_I:=\{z\in \check \C\,;\, \lnorm{Q(z)}\ne 0\}$. 
By (ii), 
$\widetilde{1/Q}(z)=1/\tilde Q(z)$
holds for $z\in U_I$.
Applying (i) for $f(u):=P(u)$ and $g(u):=1/Q(v)$,
we obtain the equality.
\end{proof}

\begin{corollary}\label{1320}
Suppose that there exists 
$c\in I$
such that
$$
f(u)=\sum_{n=0}^\infty a_n (u-c)^n
$$
converges uniformly on $I$, 
where $\{a_n\}_{n=1}^\infty$ 
is a sequence of real numbers.
Then $\tilde f$ is 
a para-holomorphic function on $U_I$ satisfying
$$
\tilde f(z)=\sum_{n=0}^\infty a_n (z-c)^{n},\quad
\tilde f'(z)=\sum_{n=1}^\infty na_n (z-c)^{n-1}\qquad z\in U_I.
$$
\end{corollary}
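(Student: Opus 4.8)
The plan is to reduce the statement to the behaviour of the two light-like components of $\tilde f$, on which the para-holomorphic power series decouples into two ordinary real power series. Recall from \eqref{eq;447} together with the idempotents of \eqref{eq:1080} that the extension can be written as
\begin{equation*}
\tilde f(z)=\epsilon_1 f(u+v)+\epsilon_2 f(u-v)\qquad (z=u+jv\in U_I),
\end{equation*}
and that, by \eqref{eq2E}, $z-c=\epsilon_1(u+v-c)+\epsilon_2(u-v-c)$ for every real $c$. The relations \eqref{eq1E}, namely $\epsilon_1^2=\epsilon_1$, $\epsilon_2^2=\epsilon_2$ and $\epsilon_1\epsilon_2=0$, then force $(z-c)^n=\epsilon_1(u+v-c)^n+\epsilon_2(u-v-c)^n$; this is the algebraic fact underlying Proposition~\ref{prop:894}, and it is the heart of the argument.

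First I would treat the partial sums $S_N(u):=\sum_{n=0}^N a_n(u-c)^n$, which are polynomials in $u$. By Proposition~\ref{prop:894} we have $\widetilde{S_N}(z)=S_N(z)=\sum_{n=0}^N a_n(z-c)^n$, while the defining formula \eqref{eq;447} gives $\widetilde{S_N}(z)=\epsilon_1 S_N(u+v)+\epsilon_2 S_N(u-v)$. Since $u+v$ and $u-v$ lie in $I$ whenever $z\in U_I$, the hypothesis yields $S_N(u\pm v)\to f(u\pm v)$ as $N\to\infty$; because $\{\epsilon_1,\epsilon_2\}$ is a basis of $\check\C$, convergence of the two real components is equivalent to convergence in $\check\C$, so
\begin{equation*}
\sum_{n=0}^\infty a_n(z-c)^n=\lim_{N\to\infty}\widetilde{S_N}(z)=\epsilon_1 f(u+v)+\epsilon_2 f(u-v)=\tilde f(z),
\end{equation*}
which is the first asserted identity. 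Para-holomorphy of $\tilde f$ on $U_I$ then follows from the discussion after \eqref{eq;447}, once we note that $f$ is smooth on the interior of its disk of convergence, where a power series may be differentiated term by term.

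For the derivative, I would invoke property~(iii) above, which gives $(\tilde f)'=\widetilde{f'}$ with $f'=df/du$. Term-by-term differentiation, valid inside the radius of convergence, yields $f'(u)=\sum_{n=1}^\infty n a_n(u-c)^{n-1}$; applying the first part of the argument to $f'$ then produces $\widetilde{f'}(z)=\sum_{n=1}^\infty n a_n(z-c)^{n-1}$, and hence the second identity. The only delicate point is the convergence bookkeeping: one must check that the para-complex series genuinely converges, which is exactly what the $\epsilon_i$-decomposition supplies by reducing it to the two given real series, and that the differentiated series can be handled on $U_I$. The subtlety here is that uniform convergence on all of $I$ need not survive differentiation; however, since we only need the pointwise identity at each fixed $z\in U_I$, convergence at the interior points $u\pm v$ suffices. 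I expect this convergence step, rather than any algebraic manipulation, to be the main obstacle.
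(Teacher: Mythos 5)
Your argument is correct and follows essentially the same route as the paper's own proof: both reduce the para-complex series to the two real series at the light-cone coordinates $u\pm v$ (your $\epsilon_1,\epsilon_2$-decomposition is just a repackaging of the real/imaginary components used in the paper), pass to the limit of the polynomial partial sums via Proposition~\ref{prop:894}, and obtain the derivative formula by applying property~(iii) together with the same argument for $f'$. Your extra care about convergence of the differentiated series at interior points is a reasonable precision of a step the paper treats briskly, but it does not change the substance of the proof.
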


\begin{proof}
We set 
$$
\phi_n(u):=\sum_{k=0}^n a_k (u-c)^k,\qquad \psi_n(u):=\sum_{k=1}^n k a_k (u-c)^{k-1} 
$$
for $u\in I$. Since convergence is uniform on $I$, we have
$$
f(u):=\lim_{n\to\infty}\phi_n(u),
\qquad f'(u):=\lim_{n\to\infty}\psi_n(u).
$$
So, it holds that
\begin{align*}
\tilde f(z)
&=\frac{f(u+v)+f(u-v)}2+j\frac{f(u+v)-f(u-v)}2 \\
&=\lim_{n\to \infty}\frac{\phi_n(u+v)+\phi_n(u-v)}2+j\frac{\phi_n(u+v)-\phi_n(u-v)}2 \\
&=\lim_{n\to \infty}\phi_n(z) = \sum_{n=0}^\infty a_n (z-c)^{n}.
\end{align*}
Applying this to the pair $(f',\psi)$, 
we have (cf. (iii))
$$
(\tilde{f})'(z)=\widetilde{f'}(z)=\sum_{n=1}^\infty n a_n (z-c)^{n}.
$$
\end{proof}

In \cite{Kon}, $\exp,\, \sinh$ and $\cosh$ 
as para-holomorphic functions are defined using power series. 
By the corollary,
they coincide with $\widetilde{\exp}$,
$\widetilde{\sinh}$ and $\widetilde{\cosh}$, respectively.
Moreover, we can apply Corollary \ref{1320} to the Maclaurin expansions of
$\sin u$ and $\cos u$ ($u\in \R$), 
and have the identities
\begin{equation}
\widetilde{\sin}\, z=\sum_{n=0}^\infty \frac{(-1)^nz^{2n+1}}{(2n+1)!},
\quad
\widetilde{\cos}\, z=\sum_{n=0}^\infty \frac{(-1)^nz^{2n}}{(2n)!}
\qquad (z\in \check\C).
\end{equation}
In this paper, 
$\tilde \exp ,\, \tilde \sin,\tilde \cos, \tilde \sinh$ and $\tilde \cosh$ 
are simply
written as $\exp ,\, \sin,\cos, \sinh$ and $\cosh$ 
unless there is confusion. 
It should be remarked that 
\begin{equation}
\frac{\tilde e^{jz}+\tilde e^{-jz}}2=\widetilde{\cosh}\, z,\quad
\frac{\tilde e^{jz}-\tilde e^{-jz}}2=\widetilde{\sinh}\, z
\qquad (z\in \check \C)
\end{equation}
hold, and $\sin,\cos$ do not appear in these identities.

We consider the case that 
$f$ has a smooth inverse $g:=f^{-1}:f(I)\to I$.
We set
$$
V_I:=\{u+jv\in \check \C\,;\, u+v,u-v\in f(I)\}.
$$
Then the function
\begin{align}\label{eq;737}
  \tilde{g}(z):=\frac{f^{-1}(u+v)+f^{-1}(u-v)}2+j\frac{f^{-1}(u+v)-f^{-1}(u-v)}{2}
\end{align}
is well-defined for $(z:=)u+jv\in V_I$.
Moreover, the following assertion holds:

\begin{proposition}\label{prop;inverse}
Let 
$f:I\to f(I)(\subset \R)$ be
a diffeomorphism 
defined on
an open subset $I$ of $\R$, 
and let $g:f(I)\to I$
be the inverse map.
Then, $\tilde g:V_{I}\to \check \C$
is a para-holomorphic function giving
the inverse map of $\tilde f:U_I\to \C$.
Moreover,
\begin{align}\label{eq;550}
  (\tilde{f}^{-1})^\prime(w)=\tilde g'(w)=\tilde h(w)
\qquad (w\in V_I)
  \end{align}
holds, where $h:=dg/du$.
\end{proposition}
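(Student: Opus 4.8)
The plan is to reduce everything to the \emph{light-cone decomposition} furnished by the idempotents $\epsilon_1,\epsilon_2$ of \eqref{eq:1080}. First I would record the key identity: for any smooth $f$ on $I$ and $z=u+jv\in U_I$, the definition \eqref{eq;447} can be rewritten as
$$
\tilde f(z)=\epsilon_1 f(u+v)+\epsilon_2 f(u-v),
$$
which follows at once from $\epsilon_1=(1+j)/2$ and $\epsilon_2=(1-j)/2$. Introducing the linear isomorphism $\Pi:\check \C\to \R^2$, $\Pi(u+jv):=(u+v,u-v)$, this says that $\tilde f$ is conjugate via $\Pi$ to the componentwise map $(p,q)\mapsto(f(p),f(q))$; moreover $\Pi$ carries $U_I$ onto $I\times I$ and $V_I$ onto $f(I)\times f(I)$. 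Since $g=f^{-1}$ is smooth on $f(I)$, the function $\tilde g$ is para-holomorphic on $V_I$ directly from \eqref{eq;447}.

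Next I would prove the inverse relation. Under $\Pi$, the map $\tilde g$ becomes the componentwise map $(p',q')\mapsto(g(p'),g(q'))$ on $f(I)\times f(I)$. Composing, $\Pi\circ(\tilde g\circ\tilde f)\circ\Pi^{-1}$ sends $(p,q)\mapsto(g(f(p)),g(f(q)))=(p,q)$ because $g\circ f=\mathrm{id}_I$, and symmetrically $\tilde f\circ\tilde g$ induces $f\circ g=\mathrm{id}_{f(I)}$. Hence $\tilde f:U_I\to V_I$ and $\tilde g:V_I\to U_I$ are mutually inverse bijections, so $\tilde g=\tilde f^{-1}$.

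Finally, for the derivative \eqref{eq;550}, I would apply property (iii) of the tilde-operation to the smooth function $g$: with $h:=dg/du$ this gives at once that $\tilde g$ is para-holomorphic with $(\tilde g)'=\tilde h$ on $V_I$. Combining with $\tilde f^{-1}=\tilde g$ yields $(\tilde f^{-1})'(w)=\tilde g'(w)=\tilde h(w)$. As a consistency check one may invoke the chain rule (property (3)) on $\tilde g\circ\tilde f=\mathrm{id}$ to recover $\tilde g'(w)=1/\tilde f'(z)$, matching the real inverse-function theorem componentwise.

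The computations are routine; the only point requiring care is the bookkeeping of domains, namely verifying that $\Pi$ matches $U_I$ with $I\times I$ and $V_I$ with $f(I)\times f(I)$, so that the two componentwise maps are genuinely inverse on the correct sets. Once the decomposition $\tilde f(z)=\epsilon_1 f(u+v)+\epsilon_2 f(u-v)$ is in place, both the diffeomorphism statement and the derivative formula follow with essentially no analysis.
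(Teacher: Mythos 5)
Your proposal is correct and follows essentially the same route as the paper: the paper also reduces the claim to the idempotent relations \eqref{eq1E} (your decomposition $\tilde f(z)=\epsilon_1 f(u+v)+\epsilon_2 f(u-v)$ and the conjugation by $\Pi$ are just a tidier packaging of that step), and likewise obtains \eqref{eq;550} by differentiating \eqref{eq;737}, i.e.\ by property (iii). The only difference is cosmetic: you verify both compositions $\tilde g\circ\tilde f$ and $\tilde f\circ\tilde g$, which makes the surjectivity of $\tilde f$ onto $V_I$ explicit, whereas the paper only records the injectivity and $\tilde f(U_I)\subset V_I$.
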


\begin{proof}
It can be easily checked that $\tilde f:U_I\to \check \C$ is an injection
and  $f(U_I)\subset V_I$.
By using
\eqref{eq1E},
it can be also checked that
$\tilde g\circ \tilde f(u+jv)=u+jv$ when $u+jv \in U_I$, which implies that
$\tilde g$ is the inverse map of $\tilde f$.
So, it is sufficient to prove \eqref{eq;550}. 
Since $h=dg/du$,
differentiating \eqref{eq;737} by $u$ according to \eqref{eq:P},
we have \eqref{eq;550}.
\end{proof}

For example, 
by Proposition~\ref{prop:894}, 
$h(u):=(1+u^2)^{-1}(=(\arctan u)^\prime)$
satisfies $\tilde h(z)=(1+z^2)^{-1}$.
By Proposition~\ref{prop;inverse}, we obtain
\begin{equation}\label{eq:TanI}
  (\arctan z)^\prime=\cos^2(\arctan z)=\frac{1}{1+z^2}
\qquad (z\in \check \C).
\end{equation}
Since $z=u+jv$ ($u,v\in \R$) satisfies
\begin{equation}\label{eq:N3}
\lnorm{1+z^2}=\Big(1+(u-v)^2\Big)\Big(1+(u+v)^2\Big)>0 \qquad (z\in \check \C),
\end{equation}
the right-hand side of \eqref{eq:TanI} is well-defined.

\end{document}